\newtheorem{theorem}{Theorem}
\newtheorem{proposition}{Proposition}
\newtheorem{corollary}{Corollary}
\newtheorem{lemma}{Lemma}
\theoremstyle{definition}
\theoremstyle{remark}
\newtheorem{example}{Example}
\newtheorem{remark}{Remark}
\renewcommand{\H}{\mathbb{H}}
\renewcommand{\P}{\mathbb{P}}
\newcommand{\R}{\mathbb{R}}
\newcommand{\C}{\mathbb{C}}
\renewcommand{\DH}{\mathbb{DH}}
\newcommand{\SV}{\mathcal{S}}
\newcommand{\NQ}{\mathcal{N}}
\newcommand{\SC}{S}
\renewcommand{\SS}{\mathbb{S}}
\newcommand{\CGA}{\ensuremath{\operatorname{CGA}}\xspace}
\newcommand{\CGAp}{\ensuremath{\CGA_+}\xspace}
\newcommand{\SO}[1][3]{\ensuremath{\operatorname{SO}(#1)}\xspace}
\newcommand{\SE}[1][3]{\ensuremath{\operatorname{SE}(#1)}\xspace}
\newcommand{\reverse}[1]{\widetilde{#1}}
\newcommand{\ci}{\mathrm{i}}
\newcommand{\qi}{\mathbf{i}}
\newcommand{\qj}{\mathbf{j}}
\newcommand{\qk}{\mathbf{k}}
\newcommand{\eps}{\varepsilon}
\newcommand{\norm}[1]{\Vert #1\Vert}
\newcommand{\Cl}[1]{\mathcal{C}\kern-0.4mm\ell(#1)}
\DeclareMathOperator{\Vect}{v}
\title[Factorization of Spinor Polynomials]{A Geometric Algorithm for the Factorization of Spinor Polynomials}
\date{\today}
\author{Zijia Li}
\address{KLMM, Academy of Mathematics and Systems Science,
  Chinese Academy of Sciences, Beijing, China}
\email{lizijia@amss.ac.cn}
\author{Hans-Peter Schröcker}
\address{Department of Basic Sciences in Engineering, University of Innsbruck,
  Innsbruck, Austria}
\email{hans-peter.schroecker@uibk.ac.at}
\author{Johannes Siegele}
\address{Department of Basic Sciences in Engineering, University of Innsbruck,
  Innsbruck, Austria}
\email{johannes.siegele@uibk.ac.at}
\keywords{conformal motion, elementary motion, null displacement, Study variety,
  null quadric, four quaternion representation}
\subjclass[2010]{
  15A66, 15A67, 20G20, 51B10, 51F15  }
\begin{document}

\begin{abstract}
  We present a new algorithm to decompose generic spinor polynomials into linear
  factors. Spinor polynomials are certain polynomials with coefficients in the
  geometric algebra of dimension three that parametrize rational conformal
  motions. The factorization algorithm is based on the ``kinematics at
  infinity'' of the underlying rational motion. Factorizations exist
  generically but not generally and are typically not unique. We prove that
  generic multiples of non-factorizable spinor polynomials admit factorizations
  and we demonstrate at hand of an example how our ideas can be used to tackle
  the hitherto unsolved problem of ``factorizing'' algebraic motions.
\end{abstract}

\maketitle

\section{Introduction}
\label{sec:introduction}

The article \cite{hegedus13:_factorization2} presented an algorithm to decompose
a rational rigid body motion into a sequence of rotations or translations,
coupled by the same rational motion parameter. In the Study model \cite{study} of space kinematics  this decomposition corresponds to the factorization of special
polynomials over the ring of dual quaternions (``motion polynomials'') into
linear factors. The algorithm to do so builds upon older ideas for factorizing 
quaternion polynomials \cites{niven41,gordon65}. In contrast to the quaternion
case, the factorization of motion polynomials has to fail in some non-generic
cases because not all motion polynomials admit factorizations with linear
factors.

The rotations or translations parametrized by linear factors can be realized by
mechanical joints. Since factorizations of motion polynomials are generically non-unique, it is possible to combine different factorizations into closed-loop
linkages. The thus resulting movable mechanical structures are of importance in theoretical mechanism science \cites{gallet16,li18:_universality_theorem} and
have some potential for engineering applications
\cites{hegedus15:_four_pose_synthesis,liu21,liu23}. Note that our notion of non-uniqueness is weaker than what is commonly found in some algebra communities, as discussed in the survey papers \cites{geroldinger2006non,baeth2015factorization,smertnig2016factorizations}.

As has been observed at several occasions, the algebraic factorization algorithm
of \cite{hegedus13:_factorization2} generalizes, under certain conditions, to
polynomials with coefficients in other algebras. It is worth to point out that the characterization \cites{gentili2021zeros,gentili2022regular} for zeros of slice functions \cite{gentili2007new}  also sheds some light on the factorization of motion polynomials over dual quaternions. In the articles
\cites{Hestenes2001,Li2001a,Li2001b,Li2001c}, the authors have provided an
in-depth analysis of the principles and applications of the general conformal
geometric algebra (\CGA). An
important example \cite{li19b} is ``projectivized'' spin groups of Clifford algebras \cites{clifford1871,lounesto2001}, which leads to the factorization of what we call ``spinor
polynomials''. The geometric foundations underlying this construction for $\CGA$ of dimension three are the topic of \cite{kalkan22}. In this
article, we present a new factorization algorithm for spinor polynomials in this
algebra that has a pronounced geometric flavor. Since $\CGA$ contains dual
quaternions (and other important algebras) as a sub-algebra, the new algorithm can also be used
to factorize motion polynomials.

We see at least two important advantages of this new algorithm:
\begin{itemize}
\item Firstly, it provides a deep geometric insight into the factorization
  process, allowing us to extend results of
  \cites{li18:_universality_theorem,scharler21} and prove existence of
  factorizable real polynomial multiples for arbitrary spinor polynomials.
\item Secondly, some aspects of this new algorithm are independent of the
  rational parametrization and only depend on the geometric curve. This opens
  possibilities to extend factorization theory to algebraic motions.
\end{itemize}

The latter point touches upon an important generalization of motion polynomial
factorization. In fact, rational motions are rare in mechanism science where
mechanically constrained movable structures typically produce configuration
varieties that are algebraic but not of genus zero. While we believe that this
article is interesting in its own right, it can also be viewed as a important
step towards eastablishing a factorization theory for algebraic motions. In fact, we will even
discuss a basic non-rational example as a proof of concept at the end of this
text, in Section~\ref{sec:four-bar}. Dual quaternions $\DH$, a sub-algebra of
$\CGA$, would be sufficient to describe rigid body kinematics but our
description and derivation will profit a lot from the richer algebraic and
geometric structure of the more general algebra~$\CGA$.

In Section~\ref{sec:preliminaries} we collect necessary concepts and notation.
In Section~\ref{sec:geometric-factorization-algorithm} we describe the new
geometric factorization procedure and prove its correctness. The two following
sections are dedicated to a result on the unconditional factorizability of
suitable real polynomial multiples of spinor polynomials (Theorem~\ref{th:3} in
Section~\ref{sec:factorization-technique}) and to the exemplary factorization of
an algebraic spherical four-bar motion (Section~\ref{sec:four-bar}). Open
questions for a more general factorization theory for algebraic motions will be
discussed at the end of this article. The technical proof of Lemma~\ref{lem:3}
is deferred to an appendix.

\section{Preliminaries}
\label{sec:preliminaries}

In this article, we study polynomials with coefficients in the conformal
geometric algebra (\CGA) in dimension three. We introduce it following the
conventions of \cite{bayro-corrochano19}*{Chapter~8}. Let us take an orthonormal
basis ${e_1,e_2,e_3,e_+,e_-}$ of the quadratic space $\R^{4,1}$ and consider a
multiplication of vectors which satisfies
\begin{equation*}
  e_1^2=e_2^2=e_3^2=e_+^2=-e_-^2=1
\end{equation*}
as well as anti-commutativity of the basis elements:
\begin{equation*}
  e_ie_j = -e_je_i
  \quad\text{for distinct $i$, $j \in \{1,2,3,+,-\}$.}
\end{equation*}
This multiplication extends in a unique way to a real associative algebra of
dimension $32$, the Clifford algebra $\Cl{4,1}$. Conformal geometric algebra
\CGA is obtained by the simple change of basis that replaces $e_-$ and $e_+$
with
\begin{equation*}
  e_o = \frac{1}{2}(e_--e_+)\quad\text{and}\quad e_\infty=e_-+e_+,
\end{equation*}
respectively. Its basis elements are the products of up to five elements from
the set $\{e_1,e_2,e_3,e_\infty,e_o\}$ and we denote them by multiple
subscripts, e.g. we write $e_{ij}$ for $e_ie_j$. The reverse $\reverse{e}_r$ of
a basis element $e_r$ for $r=r_1,r_2,\ldots r_n$ is obtained by simply reversing
the order of multiplication, i.e. $\reverse{e}_r=e_{r_n\ldots r_2,r_1}$. The
grade of $e_r$ is the cardinality of the set $\{r_1,r_2,\ldots,r_n\}$. Dot
product and wedge product of \CGA will be denoted by the symbols ``$\cdot$'' and
``$\wedge$'', respectively.

Any element $q\in\CGA$ can be written as a unique linear combination of the
basis elements. The reverse of $q$ is obtained by reversing each basis element
of this linear combination. Vectors in $\R^{4,1}$ are naturally embedded in
$\CGA$ as grade one elements
\begin{equation}
  \label{eq:1}
  a = a_oe_o+a_1e_1+a_2e_2+a_3e_3+a_\infty e_\infty.
\end{equation}

The main purpose of vectors in \CGA is to represent spheres of three-dimensional
conformal space. The vector \eqref{eq:1} represents the sphere with center
$(a_1,a_2,a_3)/a_o$ and squared radius $a^2$. Negative values of $a^2$ are
possible and lead to spheres with purely imaginary radius. Planes can be viewed
as spheres which contain the point at infinity, i.e. vectors satisfying $a \cdot
e_\infty=0$. Vectors with the property $a\reverse{a}=a^2=0$ represent points in
conformal three-space, i.e. a point $(a_1,a_2,a_3)/a_o\in\R^3$, provided
$a_o\neq 0$, or the point at infinity $e_\infty$ otherwise.

\subsection{Spinors and Conformal Displacements}

If $x$ and $s$ are spheres and $s$ is not a point, the reflection (inversion)
$y$ of $x$ in $s$ is given by the formula
\begin{equation}
  \label{eq:2}
  y = sx\reverse{s}.
\end{equation}
The group of conformal displacement is generated by reflections in spheres or
planes. Denote the even sub-algebra of \CGA by \CGAp. This is the algebra
consisting of linear combinations of basis elements of even grade, such that the
``sandwich product'' \eqref{eq:2} fixes grade one elements. The group of
conformal displacements is isomorphic to the special orthogonal group
$\SO[{4,1}]$ which, in turn, is doubly covered by the spin group
\begin{equation*}
  \{q\in\CGAp\colon q\reverse{q}=\pm 1,\ q a\reverse{q}\in\R^{4,1}\text{ for all $a\in\R^{4,1}$}\}.
\end{equation*}

It is well-known that the spin group of $\Cl{4,1}$ coincides with the group of
even-graded versors, that is, products of an even number of vectors. A spinor
$q$ acts on a vector $a$ via the sandwich product $qa\reverse{q}$ whence $q$ and
$-q$ represent the same displacement. To avoid this representation ambiguity, we
will consider \CGAp modulo the real multiplicative group $\R^\times$. In this
way, elements of $\CGAp/\R^\times$ can be viewed as points of the projective
space $\P(\CGAp) = \P^{15}(\R)$. It provides the scenery for our geometric
factorization algorithm. Whenever we wish to emphasize that elements of
$\CGAp/\R^\times$ should be considered as projective points, we use square
brackets to denote equivalence classes, that is, $[q]=[2q]=[-q]$.

A point $[q] \in \P(\CGAp)$ is represented by a spinor $q$ if
\begin{equation}
  \label{eq:3}
  q\reverse{q}=\reverse{q}q\in\R^\times.
\end{equation}
If $q\reverse{q}=\reverse{q}q$, we will call this the norm of $q$. The group
$\SO[4,1]$ as a point-set is embedded into the projective space $\P(\CGAp)$ as a
projective variety $\SV$ minus a quadric $\NQ$. The variety $\SV$ is defined by
the condition $q\reverse{q}=\reverse{q}q\in\R$ and was called \emph{Study
  variety $\SV$} in \cite{kalkan22}. It generalizes the well-known \emph{Study
  quadric} of rigid body kinematics \cite{selig05}*{Chapter~11}. The \emph{null
  quadric $\NQ$} is given by the condition that the grade zero part of
$q\reverse q$ vanishes. This is a quadratic condition so that $\NQ$ is a quadric
in the classical sense of projective geometry over vector spaces.

It is often necessary for our purposes to also consider the complex extension of
Study variety $\SV$, null quadric $\NQ$, and their ambient projective space
$\P(\CGAp)$. We will therefore tacitly allow $\CGA$ elements with complex
coefficients. This does not change much of the algebraic properties of $\CGAp$
itself but affects a lot important sub-algebras such as the quaternions $\H$ or
the dual quaternions~$\DH$.

\subsection{The Four Quaternion Representation}

Quaternions $\H$ and dual quaternions $\DH$ are embedded in \CGAp via
\begin{equation*}
  \qi\mapsto -e_{23},\quad \qj\mapsto e_{1,3},\quad \qk\mapsto -e_{12},\quad \eps\mapsto e_{123\infty}.
\end{equation*}
In \cite{kalkan22} we showed that any even-graded element $q$ can be written in
a four-quaternion representation
\begin{equation*}
  q=q_0+\eps_1 q_1+\eps_2q_2+\eps_3q_3,
\end{equation*}
with $q_0$, $q_1$, $q_2$, $q_3\in\H$ and $\eps_1=\eps=e_{123\infty}$,
$\eps_2=e_{1230}$, $\eps_3=e_{\infty 0}+1=e_+e_-$. The elements $\eps_1$,
$\eps_2$ and $\eps_3$ commute with the quaternion units; additional
multiplication rules are given in Table~\ref{tab:eps-multiplication1}. The four
quaternion representation groups the sixteen coordinates of \CGAp into four
quadruples and makes manual computations more tractable. We will use it in
examples and in the technical proof of Lemma~\ref{lem:3} in the appendix.

\begin{table}
  \centering
  \caption{Multiplication table for $\eps_1$, $\eps_2$, $\eps_3$ and quaternion
    units.}
  \label{tab:eps-multiplication1}
  \begin{tabular}{c|ccc}
    & $\eps_1$      & $\eps_2$     & $\eps_3$  \\
    \hline
    $\eps_1$ & $0$           & $\eps_3 - 1$ & $\eps_1$  \\
    $\eps_2$ & $-\eps_3 - 1$ & $0$          & $-\eps_2$ \\
    $\eps_3$ & $-\eps_1$     & $\eps_2$     & $1$
  \end{tabular}\qquad
  \begin{tabular}{c|ccc}
    &$\qi$ &$\qj$ &$\qk$ \\
    \hline
    $\qi$ &$-1$ &$\phantom{-}\qk$ &$-\qj$ \\
    $\qj$ &$-\qk$ &$-1$ &$\phantom{-}\qi$ \\
    $\qk$ &$\phantom{-}\qj$ &$-\qi$ &$-1$ \\
  \end{tabular}
\end{table}

\subsection{Spinor Polynomials}

A central object of interest in this article is polynomials $C = \sum_{i=0}^n
t^iq_i$ in the indeterminate $t$ and with coefficients $q_i$ in the even
sub-algebra \CGAp. For polynomials over a non-commutative ring, there are
different notions for multiplication and evaluation. Since we use polynomials to
parametrize rational curves in the Study variety, it is natural to treat the
indeterminate $t$ as a real (or complex) parameter and postulate that it
commutes with all coefficients of the polynomial. The caveat of this convention
is that evaluation of a polynomial for an element not in the center of $\CGAp$
requires an additional convention. We define the \emph{left evaluation of $C$ at
  $h \in \CGAp$} as $C(h) \coloneqq h^iq_i$. A corresponding \emph{right
  evaluation} $\sum_{i=0}^n q_ih_i$ exists and leads to a symmetric theory. We
will not describe the ``right'' theory explicitly but occasionally hint at minor
adaptations that are needed for it to work. Notation to distinguish between left
and right evaluation will not be required.

The reverse of a polynomial is obtained by reversing all of its coefficients,
i.e. $\reverse{C}=\sum_{i=0}^n t^i\reverse{q}_i$. Left and right norm
polynomials are defined as $C\reverse{C}$ and $\reverse{C}C$, respectively. A
polynomial in $\CGAp[t]$ is called spinor polynomial, if
$C\reverse{C}=\reverse{C}C \in \R[t] \setminus \{0\}$. In this case, we will
call $C\reverse{C}$ the \emph{norm polynomial of~$C$.}

Because of $C\reverse{C} = \reverse{C}C \in \R[t] \setminus \{0\}$, a spinor
polynomial $C$ parametrizes a rational curve in the Study variety $\SV$ that is
not entirely contained in the null quadric $\NQ$. We denote by $[C]$ the set of
curve points over the complex numbers $\C$, that is, the set $\{[C(t)] \mid t
\in \C \cup \{\infty\}\}$ with the usual understanding that $C(\infty)$ equals
the leading coefficient of~$C$.

\section{A Geometric Factorization Algorithm}
\label{sec:geometric-factorization-algorithm}

Factorizations into linear factors of univariate left polynomials with
coefficients from the ring of quaternions, dual quaternions, and split
quaternions polynomials was topic of previous research
\cites{niven41,gordon65,hegedus13:_factorization2,li19,scharler21}. They lead to
the decomposition in diverse transformation groups ($\SO$, $\SE$ and
transformations of the hyperbolic planes) into coupled ``elementary motions''
(rotations and, in case of $\SE$, also translations). As all of these algebras
are contained in \CGAp, it seems natural to study factorization with linear
factors of polynomials in $\CGAp[t]$. They correspond to the decomposition of
conformal motions into elementary motions that have been described in
\cite{dorst16} (conformal scaling, translation, and rotation). As has been
observed in \cite{kalkan22}, a standard factorization algorithm for the
mentioned quaternion algebras also works for spinor polynomials $C \in
\CGAp[t]$, at least generically. It is based on the algebra involution
(reversion), a factorization of the real norm polynomial, and division with
remainder for polynomials over non-commutative rings. We briefly present the
basic steps but omit proofs:

\begin{lemma}[Polynomial Division; {\cite{li19b}*{Theorem~1}}]
  \label{lem:1}
  Let $C$, $P\in\CGAp[t]$ be two polynomials and assume that the leading
  coefficient of $P$ is invertible. Then there exist unique polynomials $Q$ and
  $R\in\CGAp[t]$ such that $C=QP +R$ and $\deg(R)<\deg(P)$. Further, for
  $h\in\CGAp$ with $P(h)=0$, it holds $C(h)=R(h)$.
\end{lemma}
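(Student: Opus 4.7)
The statement is a standard polynomial-division-with-remainder adapted to the non-commutative coefficient ring $\CGAp$, together with an accompanying remainder theorem. My plan is to establish existence and uniqueness by the usual long-division argument and then verify the evaluation identity by a direct expansion.

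For existence I would induct on $n \coloneqq \deg C$. If $n < \deg P$, I set $Q = 0$ and $R = C$. Otherwise, with $c\, t^n$ and $p\, t^m$ the leading terms of $C$ and $P$ and $p$ invertible by hypothesis, the auxiliary polynomial
\[
  C_1 \coloneqq C - c\, p^{-1}\, t^{n-m}\, P
\]
has strictly smaller degree, since its leading coefficient is $c - c\, p^{-1}\, p = 0$. The induction hypothesis provides $Q_1$, $R$ with $C_1 = Q_1 P + R$ and $\deg R < \deg P$, and then $C = (c\, p^{-1}\, t^{n-m} + Q_1)\, P + R$ is the required decomposition.

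For uniqueness I would assume two decompositions $C = Q_1 P + R_1 = Q_2 P + R_2$. Subtracting gives $(Q_1 - Q_2)\, P = R_2 - R_1$. If $Q_1 \neq Q_2$, invertibility of the leading coefficient of $P$ forces $\deg((Q_1 - Q_2)\, P) \ge \deg P > \deg(R_2 - R_1)$, a contradiction, so $Q_1 = Q_2$ and then $R_1 = R_2$.

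For the remainder identity I would expand $Q = \sum_j t^j a_j$ and $P = \sum_i t^i b_i$; since $t$ commutes with all coefficients, the product $QP$ has $t^k$-coefficient $\sum_{i+j=k} a_j b_i$, and so
\[
  (QP)(h) = \sum_{i,j} h^{i+j}\, a_j b_i .
\]
The idea is to exploit $h^{i+j} = h^j h^i$ (the only commutation relation available, namely that $h$ commutes with its own powers) to regroup this double sum and pull out an inner factor $P(h) = \sum_i h^i b_i$, arriving at an expression of the form $\sum_j h^j\, P(h)\, a_j$ in which the middle factor annihilates the whole sum once $P(h)=0$ is used; this then yields $C(h) = (QP)(h) + R(h) = R(h)$. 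I expect this last regrouping step to be the main technical obstacle: because $\CGAp$ is non-commutative, powers of $h$ do not in general commute with the coefficients $a_j$, so the separation of $P(h)$ cannot be done by naive substitution and must be executed using only the commutation of $h$ with its own powers together with the factor orderings dictated by the conventions for the product $QP$ and for left evaluation.
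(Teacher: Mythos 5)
The existence and uniqueness parts of your argument are the standard long-division induction and degree-comparison, and they are fine; the paper itself does not re-prove these but simply cites \cite{li19b}*{Theorem~1} for the division and \cite{hegedus13:_factorization2}*{Lemma~1} for the evaluation claim, noting the extension to $\CGAp[t]$ is straightforward and using the lemma later only with $P\in\R[t]$ (see the remark following the lemma).

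The remainder-theorem step, however, has a genuine gap, and it is exactly at the point you flag as ``the main technical obstacle.'' Your target expression $\sum_j h^j\,P(h)\,a_j$ expands to $\sum_{i,j} h^{i+j}\,b_i\,a_j$, whereas what you actually have is $(QP)(h)=\sum_{i,j} h^{i+j}\,a_j\,b_i$; these differ by the transposition of $a_j$ and $b_i$, and no amount of commuting powers of $h$ past each other can repair that, since $h^i$ does not commute with $a_j$. The obstruction is not merely technical: with the divisor written on the \emph{right} as $C=QP+R$ and \emph{left} evaluation $C(h)=\sum h^i q_i$, the evaluation identity actually fails. A concrete counterexample in the quaternion sub-algebra is $P=t+\qi$, $Q=\qj$, $R=0$, so $C=QP=t\qj-\qk$; then $h=-\qi$ gives $P(h)=0$, yet $C(h)=-\qi\qj-\qk=-2\qk\neq 0=R(h)$. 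The correct pairing with left evaluation puts the divisor on the \emph{left}: for $C=PQ+R$ one has $(PQ)(h)=\sum_{i,j}h^{i+j}p_iq_j=\sum_j h^j\,P(h)\,q_j$, which does vanish when $P(h)=0$. Alternatively, $C=QP+R$ pairs with \emph{right} evaluation $\sum q_ih^i$. In the paper all applications of this lemma take $P\in\R[t]$, where $P$'s coefficients are central, $QP=PQ$, and the distinction evaporates; but if you want to prove the general statement you must get the side of the divisor and the side of evaluation to match, and your proposed regrouping, as written, cannot succeed.
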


A proof for the first part of this (well known) lemma is
\cite{li19b}*{Theorem~1}. The second statement is not trivial because left
evaluation is no ring homomorphism but extending the proof of
\cite{hegedus13:_factorization2}*{Lemma~1} from dual quaternion polynomials to
polynomials in $\CGAp[t]$ is straightforward.

\begin{remark}
  \label{rem:division}
  We will often use Lemma~\ref{lem:1} for polynomials $P \in \R[t]$. In this
  case, we have $C = QP + R = PQ + R$. Also, the second statement of
  Lemma~\ref{lem:1} becomes rather trivial.
\end{remark}

\begin{lemma}[Zeros and Left Factors]
  \label{lem:2}
  Let $C\in\CGAp[t]$ and $h\in\CGAp$. Then $t-h$ is a left factor of $C$ if and
  only if $C(h)=0$.
\end{lemma}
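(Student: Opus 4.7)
My plan is to prove both directions of the equivalence by applying Lemma~\ref{lem:1} with the specific divisor $P = t - h$. This divisor has leading coefficient $1$, which is invertible, so the hypotheses of Lemma~\ref{lem:1} are satisfied, and moreover $P(h) = h - h = 0$, which is precisely what activates the second statement of that lemma.

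For the ``if'' direction, suppose $C(h) = 0$. Perform polynomial division of $C$ by $P = t-h$, yielding unique $Q, R \in \CGAp[t]$ with $C = Q(t-h) + R$ and $\deg R < \deg P = 1$. Thus $R$ is a constant element of $\CGAp$. Because $P(h) = 0$, the second statement of Lemma~\ref{lem:1} gives $C(h) = R(h) = R$ (left evaluation of a constant polynomial is that constant). The hypothesis $C(h) = 0$ then forces $R = 0$, so $C = Q(t-h)$, exhibiting $t-h$ as a left factor.

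For the ``only if'' direction, suppose $C = Q(t-h)$ for some $Q \in \CGAp[t]$. Then $C = Q(t-h) + 0$ is already a polynomial division of $C$ by $t-h$ with remainder $0$, and by the uniqueness clause of Lemma~\ref{lem:1} this is \emph{the} division. Applying the second statement of Lemma~\ref{lem:1} once more with $R = 0$ gives $C(h) = R(h) = 0$, as required.

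The only subtle point is that, because left evaluation is not a ring homomorphism when $h$ fails to lie in the center of $\CGAp$, one cannot simply ``substitute $h$ for $t$'' into the identity $C = Q(t-h)$ and conclude $C(h) = Q(h)\cdot 0 = 0$ directly; in general $(Q(t-h))(h) \neq Q(h)\,(h-h)$. This is precisely the reason the second clause of Lemma~\ref{lem:1} is stated and proved separately, and it is what makes the argument work cleanly in both directions. Once that clause is invoked, the proof reduces entirely to the uniqueness of the quotient-remainder decomposition.
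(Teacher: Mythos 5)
Your overall strategy (polynomial division by $t-h$ plus a remainder argument) is the right one, but you have the divisor on the wrong side, and that is a genuine error, not merely cosmetic. In this paper a \emph{left} factor $t-h$ of $C$ means $C=(t-h)Q$ with $t-h$ appearing on the \emph{left}; see e.g.\ the factorization $C=(t-h_1)(t-h_2)$ in Example~\ref{ex:1}, where $t-h_1$ is called the left factor. Writing $C=Q(t-h)+R$ and concluding $C=Q(t-h)$ therefore exhibits $t-h$ as a \emph{right} factor. Your ``only if'' direction starts from the wrong hypothesis $C=Q(t-h)$, and your ``if'' direction ends with the wrong conclusion.

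There is a second, quieter problem: with $C=Q(t-h)+R$ and \emph{left} evaluation, the equality $C(h)=R(h)$ does not actually hold. Test it on $C=tq_1+q_0$: here $Q=q_1$, $R=q_0+q_1h$, while $C(h)=hq_1+q_0$, and these differ unless $h$ and $q_1$ commute. The remainder identity under left evaluation pairs with the division $C=(t-h)Q+R$, for which
\begin{equation*}
((t-h)Q)(h)=\sum_j h^{j}\,\bigl(h-h\bigr)\,q_j=0,
\end{equation*}
after rewriting $(t-h)Q=\sum_j t^{j+1}q_j-\sum_j t^{j}(hq_j)$ and telescoping. (The statement of Lemma~\ref{lem:1} reads $C=QP+R$; that form is the natural one for \emph{right} evaluation and right factors, or for $P$ with central coefficients, which covers every other use of Lemma~\ref{lem:1} in the paper since there $P$ is real. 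For the present lemma one needs the mirrored division.)

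The fix is local: replace both occurrences of $C=Q(t-h)+R$ by $C=(t-h)Q+R$, i.e.\ divide with $t-h$ as a \emph{left} divisor. Then $R$ is constant, the (left-evaluation) remainder theorem gives $C(h)=R$, so $C(h)=0$ iff $R=0$ iff $C=(t-h)Q$, which is exactly the assertion that $t-h$ is a left factor. The paper itself does not spell this out; it only cites the ``right'' analogue (\cite{li19b}*{Theorem~2}) and leaves the dualization to the reader, which is precisely the step where your sides got crossed.
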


The ``right'' version of this lemma (in the sense of right factors and right
evaluation) is \cite{li19b}*{Theorem~2}.

Using above results, we immediately obtain the following proposition, which
gives a method for the computation of linear left factors.

\begin{proposition}
  \label{prop:generic-factorization}
  Let $C \in \CGAp[t]$ be a spinor polynomial.
  \begin{itemize}
  \item If $M$ is a quadratic, monic, real factor of $C\reverse{C}$, $R$ is the
    (linear) remainder of polynomial division of $C$ by $M$, and $h$ is a common
    zero of $R$ and $M$, then $t-h$ is a left factor of~$C$.
  \item If $t-h$ is a left factor of $C$, then $R(h) = 0$ where $R$ is the
    (linear) remainder of polynomial division of $C$ by $M =
    (t-h)(t-\reverse{h})$.
  \end{itemize}
\end{proposition}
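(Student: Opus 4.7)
The plan is to derive both statements as immediate consequences of Lemmas~\ref{lem:1} and~\ref{lem:2}. In each case the only real work is to check that $M(h)=0$ under left evaluation, after which the ``remainder'' part of Lemma~\ref{lem:1} turns the division identity $C = QM + R$ into $C(h) = R(h)$.

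For the first bullet I would start from the polynomial division $C = QM + R$ with $\deg R < 2$, which is legitimate because $M$ is monic. By hypothesis $h$ is a zero of the real quadratic $M$, so Lemma~\ref{lem:1} yields $C(h)=R(h)$; the assumption $R(h)=0$ then forces $C(h)=0$, and Lemma~\ref{lem:2} converts this into the assertion that $t-h$ is a left factor of~$C$. The extra hypothesis that $M$ divides $C\reverse{C}$ plays no role in this direction; it merely describes how such an $M$ would be produced in practice.

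For the second bullet I would first verify by a short direct computation that $M(h)=0$. Expanding
\[
M = (t-h)(t-\reverse{h}) = t^{2} - (h+\reverse{h})\,t + h\reverse{h}
\]
and left-evaluating at $h$ gives $M(h) = h^{2} - h(h+\reverse{h}) + h\reverse{h} = 0$. Since $M$ is monic, polynomial division produces $C = QM + R$ with $\deg R < 2$, and Lemma~\ref{lem:1} again yields $C(h)=R(h)$. Because $t-h$ is a left factor of~$C$, Lemma~\ref{lem:2} gives $C(h)=0$, and hence $R(h)=0$. The only point that needs any attention is that here $M$ need not lie in $\R[t]$, but Lemma~\ref{lem:1} only requires the leading coefficient to be invertible, so the argument goes through without change. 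I do not anticipate any substantive obstacle beyond this brief observation.
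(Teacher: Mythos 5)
Your proof follows exactly the route the paper has in mind — the paper states the proposition is obtained ``immediately'' from Lemmas~\ref{lem:1} and~\ref{lem:2} and supplies no further argument. The first bullet is handled correctly, including the accurate observation that divisibility of $C\reverse{C}$ by $M$ is logically unused there. The verification $M(h)=0$ for $M=(t-h)(t-\reverse{h})$ in the second bullet is also correct.

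The weak point is your closing remark. The evaluation part of Lemma~\ref{lem:1}, namely that $P(h)=0$ forces $C(h)=R(h)$, is \emph{not} a consequence of the leading coefficient of $P$ being invertible: one must show $(QP)(h)=0$, and because left evaluation is not a ring homomorphism this is a substantive claim. Writing $M = t^2 - st + n$ with $s=h+\reverse{h}$, $n=h\reverse{h}$, and $Q=\sum_i t^i q_i$, one gets $(QM)(h)=\sum_i h^{i}\bigl(h^2 q_i - hq_i s + q_i n\bigr)$, which collapses to $\sum_i h^i M(h)\,q_i = 0$ only when $s$ and $n$ commute with the $q_i$, e.g.\ when $M\in\R[t]$. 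That is exactly the situation Remark~\ref{rem:division} singles out as the one in which the evaluation part of Lemma~\ref{lem:1} is ``rather trivial''; for non-central $P$ the lemma's second sentence is not self-evident. So rather than dismissing the concern, you should note that $M$ \emph{is} real in the relevant setting: the factors sought by the algorithm are spinor factors, for which $h+\reverse{h}$ and $h\reverse{h}$ are real, and then your chain $M(h)=0\Rightarrow C(h)=R(h)=0$ is clean. As written, the assertion that the argument ``goes through without change'' for a non-real $M$ rests on a reading of Lemma~\ref{lem:1} that is stronger than what the paper actually uses or justifies.
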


By Proposition~\ref{prop:generic-factorization}, we can find all linear left
factors of a spinor polynomial $C$ by computing all quadratic, monic, real
factors $M$ of $C\reverse{C}$ and all common zeros of $M$ and the linear
remainder $R$ when dividing $C$ by $M$. Finding all zeros of $R$ is a linear
problem. Generically, it has a unique solution but zero or infinitely many
solutions are possible. A zero $h$ of $R$ is valid if it satisfies the quadratic
constraint $M(h) = 0$. This also ensures that $t-h$ is really a spinor
polynomial because it has the real norm polynomial~$M$.

\begin{remark}
  If the remainder polynomial $R=tr_1+r_0$ has an invertible leading coefficient
  (generic case), its zero is uniquely given by $h\coloneqq -r_0r_1^{-1}$. Thus
  we can write $R=(t-h)r_1$. As $M$ is a factor of $C\reverse{C}$, it follows
  from
  \begin{align*}
    C\reverse{C}=M^2Q\reverse{Q} + M(Q\reverse{R}+R\reverse{Q}) + R\reverse{R}
  \end{align*}
  that $M$ is also a factor of
  $R\reverse{R}=r_1\reverse{r_1}(t-h)(t-\reverse{h})$. Since $r_1$ is
  invertible, $h$ is also a zero of $M$, thus the quadratic constraint is
  automatically fulfilled.
\end{remark}

By recursively constructing linear factors in this way, we can compute all
decompositions of a spinor polynomial $C$ into linear factors.

\begin{itemize}
\item In each step of the outlined factorization procedure, the norm polynomial
  of the constructed left factor depends on the chosen quadratic real factor
  $M$. As multiplication is non-commutative, we will in general obtain different
  factorizations into linear factors, depending on a chosen order of quadratic
  factors of $C\reverse{C}$.
\item Existence of a factorization into linear factors is by no means
  guaranteed. It might happen, that the remainder polynomial $R$ is constant or
  that $M$ and $R$ have no common zeros.
\item Likewise, it is possible that $M$ and $R$ have infinitely many common
  zeros. This leads to spinor polynomials that admit infinitely many
  factorizations.
\end{itemize}

There is a fairly complete and recent a priori characterization of dual
quaternion polynomials, which admit a factorization
\cite{li22:_characterization}. A similar criterion for the factorizability of
split quaternion polynomials is not available and currently out of reach. For
this reason, we will largely focus on generic spinor polynomials in this article.

The first step in computing factorizations of $C$ according to the described
algebraic procedure consists of computing all monic, quadratic, real factors of
$C\reverse{C}$. This, essentially, means computing its zeros over $\C$ and
combining pairs of real or complex conjugate roots. In geometric terms computing
the complex roots of $C\reverse{C}$ amounts to computing the parameter values of
intersection points of $[C]$ with the null quadric~$\NQ$. In the remainder of
this section, we will show how to construct linear factors directly from these
points and their parameter values.

\subsection{Null Displacements}

We continue by exploring algebraic and kinematic properties of points in the
intersection of Study variety $\SV$ and null quadric $\NQ$. In doing so, we
freely use the scalar extension of $\CGAp$ by complex numbers. This does not
change essential properties of $\CGAp$ over $\R$ but affects sub-algebras such
as the quaternions or the dual quaternions. We are still interested in
real factorizations but complex algebra elements need to be considered in order
to get all real factorizations.

We call any point $[n] \in \SV \cap \NQ$ a \emph{null displacement.} The
following is an important lemma with a rather technical proof that we put off
into the appendix.

\begin{lemma}
  \label{lem:3}
  For a null displacement $[n]\in\SV\cap\NQ$ there exists a vector $x \neq 0$
  such that $xn=0$.
\end{lemma}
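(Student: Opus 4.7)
The plan is to show that the left multiplication map $L_n \colon \R^{4,1} \to \Cl{4,1}^{-}$, $x \mapsto xn$ has a non-trivial kernel. First I unpack the hypotheses: $[n] \in \SV$ means $n\reverse{n} = \reverse{n}n \in \R$, and $[n] \in \NQ$ means the grade-zero part of $n\reverse{n}$ vanishes, so taken together $n\reverse{n} = \reverse{n}n = 0$.

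The starting observation is that on the cone over $\SV \setminus \NQ$ the map $L_n$ is already injective: if $xn = 0$ then $xn\reverse{n} = 0$, and since $n\reverse{n}$ is a non-zero scalar, $x = 0$. Hence on a Zariski-open subset of the cone over $\SV$ the map $L_n$ has maximal rank $5$, and on the null quadric it has rank at most $5$. The content of the lemma is that the rank drops strictly on $\SV \cap \NQ$, and this requires a direct algebraic verification.

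For this direct verification I would pass to the four-quaternion representation $n = n_0 + \eps_1 n_1 + \eps_2 n_2 + \eps_3 n_3$ and write a general vector as $x = x_o e_o + x_1 e_1 + x_2 e_2 + x_3 e_3 + x_\infty e_\infty$. Expanding $xn$ with the aid of Table~\ref{tab:eps-multiplication1} and the quaternion embedding of the basis bivectors gives a linear system in the five unknowns $x_o, x_1, x_2, x_3, x_\infty$ whose $16 \times 5$ coefficient matrix $M(n)$ depends polynomially on the sixteen real components of $n_0, n_1, n_2, n_3$. The task is then to show that every $5 \times 5$ minor of $M(n)$ lies in the ideal generated by the quadratic relations coming from $n\reverse{n} = 0$ and $\reverse{n}n = 0$. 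A successful reduction of all such minors produces an explicit $x$ in the kernel by Cramer's rule applied to a $4 \times 4$ minor of maximal rank.

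The main obstacle is the combinatorial size of the calculation: sixteen coefficients of $n$ constrained by several quadratic relations, against a $16 \times 5$ matrix of linear forms in those coefficients. A useful shortcut is to normalize $n$ up to right multiplication by invertible elements of $\CGAp$: since $x(nm) = (xn)m$, we have $xn = 0 \iff xnm = 0$ whenever $m$ is invertible, so the right action preserves the left vector annihilator. Identifying $\CGAp$ with a matrix algebra, null displacements correspond to rank-deficient matrices, and right multiplication reduces these to a small family of normal forms on which the existence of a suitable $x$ can be checked by inspection; transporting back along the right action then yields $x$ for the original $n$.
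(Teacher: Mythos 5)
Your reduction of the hypotheses to $n\reverse{n}=\reverse{n}n=0$ is correct, and the observation that $L_n$ is injective off $\NQ$ is a useful sanity check, but from there the proposal is a plan rather than a proof: neither of the two routes you sketch is actually carried out, and the second one has a genuine gap.

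The first route --- showing that every $5\times 5$ minor of the $16\times 5$ matrix $M(n)$ lies in the ideal of $\SV\cap\NQ$ --- is precisely the content of the lemma, and you leave it as ``the task.'' The paper's actual proof does something closely related but constructive: it writes $n=q_0+\eps_1q_1+\eps_2q_2+\eps_3q_3$ and $x=x_0e_o+Xe_{123}+x_\infty e_\infty$, expands $xn=0$ into four quaternionic equations, and then \emph{exhibits} a non-zero solution $x$ in each of a long list of cases, repeatedly invoking the Study identity $\Vect(q_0\reverse{q_3})=\Vect(q_1\reverse{q_2})$ and the null condition $\norm{q_0}-\norm{q_3}=\SC(q_1,q_2)$. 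This is the part you would still need to do.

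The second route --- normalizing $n$ by right multiplication --- contains a substantive error. You are right that $xn=0\iff x(nm)=0$ for invertible $m$, so the left vector-annihilator is preserved by arbitrary right multiplication. But right multiplication by a general unit of $\CGAp$ does \emph{not} preserve the Study variety: $(nm)\widetilde{(nm)}=n\,(m\reverse{m})\,\reverse{n}$ is a scalar multiple of $n\reverse{n}$ only if $m\reverse{m}$ is itself scalar. So if you normalize over the full unit group, your ``normal forms'' are merely rank-deficient elements of $\CGAp$ and are generally no longer null displacements. The lemma, however, really does use the Study condition, not just non-invertibility: the odd part of $\Cl{4,1}$ is a $16$-dimensional $\CGAp$-module, while $\R^{4,1}$ is only a $5$-dimensional subspace of it, so there is no reason a generic rank-deficient $n$ should have an annihilator that happens to lie in that $5$-dimensional slice (the paper's case analysis is exactly the place where the Study relations are consumed to force this). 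If you instead restrict right multiplication to spinors (for which $m\reverse{m}\in\R^\times$), then $\SV\cap\NQ$ is preserved, but the orbit classification is no longer the easy ``column space of a matrix'' picture, and you would have to work it out; it is not obviously smaller than the paper's case analysis. In its current form, ``a small family of normal forms on which the existence of $x$ can be checked by inspection'' is an unproved assertion, and the step from the Study and null conditions to the existence of a vector annihilator is missing.
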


If no confusion with more general algebra elements is to be expected, we refer
to a vector $x$ as in Lemma~\ref{lem:3} as \emph{left annihilator.} By
Remark~\ref{rem:annihilator-point} below, it is even a point whence we also call
it \emph{left annihilating point.} Generically, it is unique up to scalar
multiples. For the following proposition, denote by $\SS$ the set of spheres
(including points and planes) in conformal three space.

\begin{proposition}
  \label{prop:2}
  For a null displacement $[n] \in \SV \cap \NQ$, the map $\SS \dashrightarrow
  \SS$, $[x] \mapsto [nx\reverse{n}]$ is constant wherever it is well-defined.
\end{proposition}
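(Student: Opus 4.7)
The key observation, which reduces the proposition to essentially a one-line identity, is that a null displacement satisfies the strong algebraic relation $n\reverse{n}=\reverse{n}n=0$. Indeed, $[n]\in\SV$ forces $n\reverse{n}=\reverse{n}n$ to be a scalar, and $[n]\in\NQ$ says that this scalar vanishes.

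My plan is to pick two arbitrary spheres $[x],[y]\in\SS$ in the domain of the map, set $v\coloneqq nx\reverse{n}$ and $w\coloneqq ny\reverse{n}$, and show directly that $v$ and $w$ are linearly dependent. Using that the scalar $\reverse{n}n=0$ commutes through the computation, I obtain
\begin{equation*}
  vw \;=\; nx\reverse{n}\,ny\reverse{n} \;=\; nx\,(\reverse{n}n)\,y\reverse{n} \;=\; 0.
\end{equation*}
Since both $v$ and $w$ are grade-one elements of $\CGA$, the Clifford product decomposes as $vw = v\cdot w + v\wedge w$ with a scalar and a bivector part. The vanishing of $vw$ forces in particular $v\wedge w=0$, so $v$ and $w$ are linearly dependent. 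This yields $[v]=[w]$ whenever both are nonzero, which is precisely the set on which the map $[x]\mapsto[nx\reverse{n}]$ is defined.

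The main obstacle, and the only step that needs care beyond the identity $\reverse{n}n=0$, is the claim that $nx\reverse{n}$ actually lies in $\R^{4,1}$ for every vector $x$. For $[n]\in\SV\setminus\NQ$, which is identified with $\SO[{4,1}]$ in the paper, this is the standard fact that spinors act on spheres by the sandwich formula \eqref{eq:2}. To extend the property to $\SV\cap\NQ$, I would argue that ``the grade-three and grade-five parts of $nx\reverse{n}$ vanish for all vectors $x$'' is a polynomial condition in the coefficients of $n$, and hence passes from the Zariski-dense subset $\SV\setminus\NQ$ to its closure $\SV$. Alternatively one may invoke Lemma~\ref{lem:3} together with the fact that the image of the linear map $x\mapsto nx\reverse{n}$ lies in the space of left annihilators of $n$, which the preceding computation shows to be the case in any event. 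Once grade one is secured, the vanishing of the Clifford product immediately forces proportionality and finishes the proof.
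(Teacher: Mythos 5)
Your proof is correct and takes a genuinely different, and in some respects cleaner, route than the paper's. The paper invokes Lemma~\ref{lem:3} to obtain a left annihilator $a$ with $an=0$, then argues $a(nx\reverse{n}) = 0$ for every $x$, so $[nx\reverse{n}]=[a]$ whenever nonzero; this has the added benefit of identifying the constant value as the (projectivized) left annihilator, a fact the paper uses throughout the rest of the section. You instead use only the algebraic identity $\reverse{n}n = 0$, which follows directly from $[n]\in\SV\cap\NQ$, to show that any two images $v=nx\reverse{n}$, $w=ny\reverse{n}$ satisfy $vw=0$ and hence $v\wedge w = 0$; this avoids the existence statement of Lemma~\ref{lem:3} entirely and is a nice elementary argument, though it establishes constancy without naming the value.

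Both your argument and the paper's require the grade-one claim that $nx\reverse{n}\in\R^{4,1}$: in the paper this is needed so that $a(nx\reverse{n})=0$ implies linear dependence of two vectors, and in yours so that $vw$ decomposes as $v\cdot w + v\wedge w$. The paper leaves this implicit (it is baked into the statement $\SS\dashrightarrow\SS$). You are right to flag it, and your Zariski-density argument is the natural way to address it, but it presupposes that $\SV\setminus\NQ$ is dense in $\SV$ -- equivalently that $\SV$ is the Zariski closure of $\SO[4,1]$ -- which the paper asserts in passing but does not prove; if $\SV$ had extra components this would fail. Your alternative via Lemma~\ref{lem:3} and ``the image lies in the space of left annihilators'' is too vague as stated: $(nx\reverse{n})n=0$ does show that $nx\reverse{n}$ is a left annihilator of $n$, but the left-annihilator ideal of $n$ contains elements of all odd grades, so this alone does not force grade one. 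Neither gap is fatal to the overall argument, but the density route is the one that can actually be made rigorous.
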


\begin{proof}
  Let $x$ be an arbitrary vector and set $y \coloneqq nx\reverse{n}$. By
  Lemma~\ref{lem:3}, there exists a vector $a$ such that $an=0$. Thus
  $ay=anx\reverse{n}=0$, which shows, that $y$ and $a$ are either linearly
  dependent, i.e. $[y] = [a]$ or $y=0$.
\end{proof}

\begin{remark}
  \label{rem:annihilator-point}
  The left annihilator $a$ of $n$ in Lemma~\ref{lem:3} satisfies $\reverse{a}a =
  0$ as we have $\reverse{a}an = 0$ and $n \neq 0$. Thus, it is a point,
  possibly with complex coordinates.
\end{remark}

\begin{remark}
  \label{rem:zero-displacement}
  For $[n] \in \SV \cap \NQ$ there exists a linear form $l(x)$ such that
  $nx\reverse{n} = l(x)a$. Take, for example $n = \eps_1$ whence $nx\reverse{n}
  = l(x)a$ with $l(x) = x_o$ and $a = e_\infty$. In non-generic cases, it is
  possible that $l$ vanishes. Thus, it is possible that the map mentioned in
  Proposition~\ref{prop:2} is nowhere defined. One example of this is $n =
  \eps_2(1 + \qi\ci)$.\footnote{The complex unit $\ci \in \C$ is not to be
    confused with the quaternion unit $\qi$.}
\end{remark}

The proof of Proposition~\ref{prop:2} shows:

\begin{corollary}
  For a null displacement $[n] \in \SV \cap \NQ$, the map $[x] \mapsto
  [nx\reverse{n}]$ is undefined on $\SS$ if and only if $n$ has two independent
  left annihilating points.
\end{corollary}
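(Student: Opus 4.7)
The plan is to read both directions off the argument already established in the proof of Proposition~\ref{prop:2}, which shows that for any vector $x$ and any left annihilating point $a$ of $n$, the vector $y := nx\reverse{n}$ satisfies $ay=0$. Since $a$ is a null point (Remark~\ref{rem:annihilator-point}) and $y$ is a vector, decomposing $ay = a\cdot y + a\wedge y$ into its grade-$0$ and grade-$2$ parts shows that both pieces vanish separately, forcing $y \in \mathrm{span}(a)$. The corollary is then a statement about when this ``target line'' collapses to $\{0\}$. Note also that the map being undefined on all of $\SS$ is equivalent to $nx\reverse{n}=0$ for every $x \in \R^{4,1}$, since that is precisely the set of representatives lacking a projective image.

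For the direction ``two independent annihilators $\Rightarrow$ map undefined'', I would apply the observation above twice. If $a_1$ and $a_2$ are linearly independent left annihilating points of $n$, then for every $x$ the vector $y=nx\reverse{n}$ lies in $\mathrm{span}(a_1)\cap\mathrm{span}(a_2)=\{0\}$. Hence $nx\reverse{n}=0$ identically, and the map is nowhere defined on $\SS$.

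For the converse, I would argue the contrapositive: assuming that $n$ admits (up to scalar) a unique left annihilating point $a$, I want to show that the sandwich is not identically zero. By Remark~\ref{rem:zero-displacement} there is a linear form $l$ on $\R^{4,1}$ with $nx\reverse{n}=l(x)a$, and the task reduces to proving $l\not\equiv 0$. The hard part, and the step I expect to require genuine work, is ruling out a hidden rank collapse: a priori one could imagine $l\equiv 0$ even with only a one-dimensional left annihilator. I would attack this by a dimension count in the four-quaternion representation of Section~2.2. The condition $l\equiv 0$ is equivalent to $n\R^{4,1}\reverse{n}=\{0\}$, which by reversion translates to $\R^{4,1}\reverse{n}\subseteq \mathrm{RAnn}(n)$, a five-parameter family of right-annihilating elements; combining this with the null-displacement condition $n\reverse{n}=0$ and Lemma~\ref{lem:3} applied to $\reverse{n}$ should produce a vector $a'\in\R^{4,1}$, independent of $a$, with $a'n=0$, contradicting the assumed uniqueness. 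The existence of this second annihilator is the exact statement required, so the contrapositive gives the corollary.

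The main obstacle I anticipate is the last step: producing $a'$ explicitly rather than only on dimensional grounds. This is a linear-algebraic problem in $\CGAp\cong M_2(\H)$, and the cleanest route is probably to mimic the case analysis in the appendix proof of Lemma~\ref{lem:3}, where a left annihilator is constructed directly from the four quaternion components of $n$; once that machinery is in place, the additional constraints coming from $l\equiv 0$ cut down the solution space by exactly the one further dimension needed.
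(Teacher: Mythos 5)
Your forward implication (two independent left annihilating points imply $nx\reverse{n}\in\mathrm{span}(a_1)\cap\mathrm{span}(a_2)=\{0\}$ for all $x$) is exactly the content of the proof of Proposition~2 applied twice, and this matches the only argument the paper gives; the paper treats the entire Corollary as an immediate consequence of that proof and supplies no further reasoning. Where you diverge from the paper is that you correctly notice that the converse implication (map nowhere defined $\Rightarrow$ two independent left annihilators) does \emph{not} fall out of the proof of Proposition~2, and you explicitly flag this as the step requiring genuine work. The paper does not address this gap at all, so there is no paper argument to compare your sketch against.

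That said, your sketch of the converse is not yet a proof and has one concrete wrinkle worth fixing. You propose to invoke Lemma~3 applied to $\reverse{n}$, but this produces a vector $b$ with $b\reverse{n}=0$, i.e.\ (after reversing) $nb=0$: a \emph{right} annihilator of $n$, not the second \emph{left} annihilator you need. The reversion step you mention ($n\R^{4,1}\reverse{n}=0$ is self-reverse since vectors are fixed by reversion) also does not transport left-annihilation data to the right or vice versa, so the chain as written does not close. Your fallback suggestion --- redoing the four-quaternion case analysis of the appendix with the extra constraint $nx\reverse{n}=0$ for all $x$ and showing the left-kernel $\{x\in\R^{4,1}:xn=0\}$ then has dimension $\ge 2$ --- is the right shape of argument and is plausibly what the authors have in mind, but it would need to be carried out. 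As it stands, the forward direction of your proposal is correct and agrees with the paper; the backward direction identifies a real gap (one the paper itself glosses over) but does not yet fill it.
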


We see that there exist two types of null displacements: Generically, the left
annihilating points are unique up to scalar multiples and the map of
Proposition~\ref{prop:2} is well-defined for generic spheres. Special null
displacements of linearly independent left annihilators and the map $[x] \to
[nx\reverse{n}]$ is nowhere defined.

\subsection{A Geometric Factorization Algorithm}

Using the concept of null displacements and their annihilators, it is possible
to devise a predominantly geometric construction of left factors for a spinor
polynomial $C$ (and of course a symmetric construction of right factors). The
polynomial $C$ parametrizes a curve on the Study variety. This curve $[C]$
intersects the null quadric in a finite number of points. Let us take two
distinct intersection points $n_1$ and $n_2$ of this curve\footnote{The case of
  only a single intersection point of (necessarily) high multiplicity is dealt
  with in Section~\ref{sec:multiple-root}.} with the null quadric corresponding
to two (possibly complex conjugate) parameter values $z_1$ and $z_2$. Because of
Lemma~\ref{lem:1}, the remainder polynomial obtained from division of $C$ by the
real polynomial $(t-z_1)(t-z_2)$ is precisely the linear polynomial
interpolating $n_1$ at the parameter value $z_1$ and $n_2$ at $z_2$,
respectively. This suggests, that all the information about a left factor of $C$
(provided it exists) are encoded in $n_1$, $n_2$, $z_1$ and $z_2$. Indeed, the
following theorem shows how to construct a left factor from these entities
alone. It uses the concept of ``non-orthogonal points'' $a_1$, $a_2$. This means
that their Clifford dot product $a_1 \cdot a_2$ does not vanish.

\begin{theorem}
  \label{th:1}
  Let $C$ be a monic spinor polynomial and $M \coloneqq (t-z_1)(t-z_2)\in\R[t]$
  a monic, quadratic factor of $C\reverse{C}$ with $z_1$, $z_2 \in \C$. If
  $C(z_1)$ and $C(z_2)$ have non-orthogonal points $a_1$, $a_2$ as respective
  left annihilators, then $C$ has a left factor $t-h$ where
  \begin{equation}
    \label{eq:4}
    h\coloneqq z_1-\frac{z_1-z_2}{ 2a_1 \cdot a_2}a_1a_2
             = \frac{z_1+z_2}{2}-\frac{z_1-z_2}{ 2a_1\cdot a_2}a_1\wedge a_2
  \end{equation}
  and $(t-h)(t-\reverse{h})=M$.
\end{theorem}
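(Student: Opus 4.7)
The plan is to verify the two assertions separately. First, I would show that $(t-h)(t-\reverse{h})=M$ by computing $h+\reverse{h}$ and $h\reverse{h}$. Second, by Lemma~\ref{lem:2}, the factor claim reduces to $C(h)=0$, which I would handle via polynomial division by the real polynomial $M$ using Lemma~\ref{lem:1} together with the annihilation properties of $a_1$ and $a_2$.

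For the first claim, I would work with the second form of~\eqref{eq:4}, writing $h=\alpha-\beta B$ with scalars $\alpha=(z_1+z_2)/2$, $\beta=(z_1-z_2)/(2\,a_1\cdot a_2)$ and bivector $B=a_1\wedge a_2$. Reversion fixes scalars and negates grade-two elements, so $\reverse{h}=\alpha+\beta B$, giving $h+\reverse{h}=z_1+z_2$ and $h\reverse{h}=\alpha^2-\beta^2 B^2$. Hence $h\reverse{h}=z_1z_2$ reduces to the identity $(a_1\wedge a_2)^2=(a_1\cdot a_2)^2$. This follows from Remark~\ref{rem:annihilator-point}, which yields $a_1^2=a_2^2=0$, together with $a_1a_2+a_2a_1=2\,a_1\cdot a_2$: expanding $(a_1a_2)^2=a_1(2\,a_1\cdot a_2-a_1a_2)a_2=2(a_1\cdot a_2)\,a_1a_2$ and substituting $a_1a_2=a_1\cdot a_2+a_1\wedge a_2$ pins down $(a_1\wedge a_2)^2$.

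For the second claim, I would invoke Lemma~\ref{lem:1} together with Remark~\ref{rem:division}, since $M\in\R[t]$ is central, to write $C=QM+R$ with $\deg R\le 1$; once $M(h)=0$ is known from step one, the same lemma yields $C(h)=R(h)$. Evaluating $C=QM+R$ at the central scalars $z_i$ gives $R(z_i)=C(z_i)=:n_i$, so $R$ is the unique linear polynomial interpolating $n_i$ at $z_i$. A short computation then reduces $R(h)=0$ to the identity $(h-z_2)n_1-(h-z_1)n_2=0$.

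The decisive observation, and the one that reveals why~\eqref{eq:4} is defined as it is, is that $h-z_i$ factors through the annihilator $a_i$: the first form of~\eqref{eq:4} gives $h-z_1=-\tfrac{z_1-z_2}{2\,a_1\cdot a_2}\,a_1a_2$, and using $a_2a_1=2\,a_1\cdot a_2-a_1a_2$ yields $h-z_2=\tfrac{z_1-z_2}{2\,a_1\cdot a_2}\,a_2a_1$. Substituting into the above identity, each of the two terms vanishes separately because $a_1n_1=0$ and $a_2n_2=0$ by hypothesis. I do not anticipate a serious obstacle: the non-orthogonality assumption $a_1\cdot a_2\neq 0$ is needed only to keep the formula for $h$ well-posed, and the construction is purpose-built so that the annihilator relations finish the argument almost mechanically.
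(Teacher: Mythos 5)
Your proof is correct and takes essentially the same route as the paper: polynomial division of $C$ by $M$, the interpolation form of the remainder $R$, and the key observation that $h-z_1$ and $h-z_2$ factor through $a_1a_2$ and $a_2a_1$ so that the annihilation relations $a_1n_1=a_2n_2=0$ kill both terms of $R(h)$. You are in fact a bit more explicit than the paper about verifying $(t-h)(t-\reverse{h})=M$, which the paper leaves implicit in the computation $M(h)=0$. The one step the paper includes that you omit is the verification that $h$ as defined by \eqref{eq:4} is a \emph{real} element of $\CGAp$ when $z_1$, $z_2$ are complex conjugates (the typical case for a real norm polynomial): one chooses $a_2=\overline{a}_1$, writes $a_1=a_R+\ci a_I$, and checks that $a_1\wedge\overline{a}_1=-2\ci\,(a_R\wedge a_I)$ is purely imaginary while $z_1-\overline{z}_1$ and $a_1\cdot\overline{a}_1$ supply matching imaginary/real factors, so the bivector part of $h$ is real. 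Without this check, your argument only produces a left factor over the complexification of $\CGAp$ rather than in $\CGAp[t]$ itself.
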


\begin{proof}
  Let $a_1$ and $a_2$ be non-orthogonal points, such that $a_1C(z_1) = a_2C(z_2)
  = 0$. Such points exist by Lemma~\ref{lem:3}. Define $h \in \CGAp$ by
  \eqref{eq:4}. For $z_1$ and $z_2\in\R$, this $h$ obviously is a real element
  of $\CGAp$. For complex roots of $M$, we have $z_2=\overline{z_1}$. This
  implies $C(z_2)=\overline{C(z_1)}$ and we can choose $a_2=\overline{a}_1$.
  Writing $a_1=a_R + \ci a_I$ where $a_R$ is the real part and $a_I$ the
  imaginary part of $a_1$, we then have
  \begin{align*}
    a\overline{a} &= a_R^2 - a_I^2 - 2 \ci (a_R\wedge a_I),\\
    \overline{a}a &= a_R^2 - a_I^2 + 2 \ci (a_R\wedge a_I).
  \end{align*}
  Therefore, $2 a\wedge\overline{a}=a\overline{a}-\overline{a}a=-4\ci(a_R\wedge
  a_I)$. Since $z_1-\overline{z_1} \in \ci\R$, we see that $h$ is real in this
  case as well.

  We will show that $h$ is a left zero of $C$. Lemma~\ref{lem:2} then implies
  that $t-h$ is a left factor of $C$. We use polynomial division to write $C$ as
  $C=Q M + R$ with polynomials $Q$, $R \in \CGAp[t]$ and $\deg R \le 1$. By
  Lemma~\ref{lem:1}, it is sufficient to show that $h$ is a left zero of both,
  $R$ and $M=(t-z_1)(t-z_2)$. As $C(z_1)=R(z_1)$ and $C(z_2)=R(z_2)$, $R$ is the
  unique linear polynomial interpolating $n_1\coloneqq C(z_1)$ and $n_2\coloneqq
  C(z_2)$ at parameter values $z_1$ and $z_2$ respectively, i.e.
  \begin{equation*}
    R=\frac{(t-z_1)n_2-(t-z_2)n_1}{z_2-z_1}.
  \end{equation*}
  Moreover, we have
  \begin{align*}
    h-z_1 &= -\frac{z_1-z_2}{2 a_1 \cdot a_2} a_1 a_2,\\
    h-z_2 &= \frac{z_1-z_2}{ 2 a_1 \cdot a_2}(2 a_1 \cdot a_2 - a_1 a_2)=\frac{z_1-z_2}{2 a_1 \cdot a_2}a_2 a_1 .
  \end{align*}
  Since $a_1n_1=0$ and $a_2n_2=0$, it holds that $(h-z_1)n_2=0$ and $(h-z_2)n_1=0$.
  Thus $h$ is a left zero of $R$. Furthermore,
  \begin{equation*}
    M(h)=(h-z_1)(h-z_2)= -\Bigl( \frac{z_1-z_2}{2a_1 \cdot a_2} \Bigr)^2 a_1a_2a_2a_1 = 0.
    \qedhere
  \end{equation*}
\end{proof}

\begin{remark}
  By Equation~\eqref{eq:4}, the ``vector part'' $a_1 \wedge a_2$ of $h$ is, up
  to scalar multiples, determined by the intersection points $[n_1]$ and $[n_2]$
  alone and does not require knowledge of their respective parameter values
  $z_1$, $z_2$. This shows that the elementary motion $t - h$ is determined, up
  to an affine re-parametrization $t \mapsto \alpha t + \beta$ with $\alpha$,
  $\beta \in R$, by $[n_1]$ and $[n_2]$ alone. This observation is crucial for
  extensions of the factorization algorithm to algebraic motions that come
  without a given parametrization.
\end{remark}

\begin{remark}
  The left annihilating points $a_1$ and $a_2$ can be computed solving the
  systems of homogeneous linear equations arising from $a_1n_1 = a_2n_2 = 0$.
  Proposition~\ref{prop:2} suggests a more straightforward method: Pick a random
  vector $x$ and set $a_1 = n_1x\reverse{n}_1$, $a_2 = n_2x\reverse{n}_2$. It
  works, however, only in generic cases and for generic choices of~$x$.
\end{remark}

\begin{example}
  \label{ex:1}
  Let us consider the spinor polynomial $C = t^2+1+\eps_1(bt\qi+a\qj)$, where
  $0<b\le a$. It is actually a motion polynomial in the sense of
  \cite{hegedus13:_factorization2} and it is known that it admits factorizations
  (over the dual quaternions) if and only if $a = b$
  \cite{li15:_survey}*{Proposition~6}. Here, we investigate factorizability over
  \CGAp using Theorem~\ref{th:1}. There are only two intersection points of the
  curve $[C]$ with $\NQ$,
  \begin{equation*}
    n_1 = \eps_1(a \qj + b \ci \qi)
    \quad\text{and}\quad
    n_2 = \eps_1(a \qj - b \ci \qi).
  \end{equation*}
  If $a \neq b$, the unique left annihilating point for both, $n_1$ and $n_2$,
  is $a_1 = a_2 = e_\infty$. Hence, the necessary condition of
  Theorem~\ref{th:1} is not fulfilled. If, however, $a = b$, we have infinitely
  many respective left annihilating points,
  \begin{equation*}
    a_1 =  \mu_1\ci e_1 + \mu_1 e_2 + \lambda_1 e_\infty
    \quad\text{and}\quad
    a_2 = -\mu_2\ci e_1 + \mu_2 e_2 + \lambda_2 e_\infty
  \end{equation*}
  with $\mu_1$, $\mu_2$, $\lambda_1$, $\lambda_2 \in \C$. Via
  Theorem~\ref{th:1}, they give rise to infinitely many left factors $t-h_1$ and
  factorizations $C = (t - h_1)(t - h_2)$ where
  \begin{equation*}
    \begin{aligned}
      h_1 &= \frac{1}{2\mu_1\mu_2} (- 2 \mu_1 \mu_2 \qk + \eps_1((\lambda_1 \mu_2 + \lambda_2 \mu_1) \qi + (\lambda_1 \mu_2 - \lambda_2 \mu_1) \ci \qj)),\\
      h_2 &= \frac{-1}{2\mu_1\mu_2} (-2 \mu_1 \mu_2 \qk + \eps_1((2 a \mu_1 \mu_2 + \lambda_1 \mu_2 + \lambda_2 \mu_1) \qi +(\lambda_1 \mu_2 - \lambda_2 \mu_1) \ci \qj)).
    \end{aligned}
  \end{equation*}
  The quaternions $h_1$ and $h_2$ both have real coefficients if and only if
  $\mu_1$, $\mu_2$ and $\lambda_1$, $\lambda_2$, respectively, are complex
  conjugates. The factorizations we found are precisely those of
  \cite{li15:_survey}*{Proposition~15}. In other words, for this example
  extending the algebra from dual quaternions to \CGAp does not yield more
  factorizations.
\end{example}

The construction of the left factor in the proof of Theorem~\ref{th:1} uses two
non-orthogonal points $a_1$ and $a_2$ obtained from a quadratic factor of the
norm polynomial with \emph{distinct} roots. This raises the question, if it is
also possible to use a quadratic factor $(t-z)^2$ of the norm polynomial which
has one root $z$ of multiplicity two. This factor, however, corresponds to only
one intersection point $[n]=[C(z)]$ of the curve parametrized by a spinor
polynomial $C$ with the null quadric $\NQ$. Generically, $[n]$ will only have a
unique annihilating point $[a]$ and we cannot use the construction above. If the
left annihilating point of $[n]$ is not unique, there exist two distinct vectors
$a_1$ and $a_2$ such that $a_1n=a_2n=0$. This implies, however,
$(a_1a_2+a_2a_1)n=0$ and since $2a_1\cdot a_2= (a_1a_2+a_2a_1)\in\R$ lies in the
center of \CGA, we have $a_1\cdot a_2=0$. Thus, all possible choices of two
different left annihilating points of the same zero displacement are orthogonal
and cannot be used in the construction above. Therefore it is necessary to
investigate the case of quadratic factors of the norm polynomial separately.

\subsection{Norm Polynomials with Quadratic Factors}
\label{sec:multiple-root}

Theorem~\ref{th:1} only allows the construction of a left factor from two
\emph{distinct} intersection points of the curve $C$ with the null quadric. It
is, however, also possible, at least generically, to obtain a left factor from a
single intersection point $n$, provided it corresponds to a zero $z$ of
$C\reverse{C}$ of multiplicity two or higher. For this special case we are able
to provide a simple sufficient criterion for a left factor to exist that is also
\emph{necessary.}

\begin{theorem}
  \label{th:2}
  Let $C$ be a spinor polynomial such that $(t-z)^2\in\R[t]$ is a factor of
  $C\reverse{C}$. Then there exists a left factor $t-h$ of $C$ with
  $(t-h)(t-\reverse{h}) = (t-z)^2$ if and only if $\reverse{C'(z)}C(z) \neq 0$.
\end{theorem}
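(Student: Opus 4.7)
My plan is to apply Lemma~\ref{lem:1} with $M = (t - z)^2 \in \R[t]$, writing $C = QM + R$ with a linear remainder $R = r_1 t + r_0$. Then $r_1 = C'(z)$ (since $M(z) = M'(z) = 0$), and with $n := C(z)$ one has $R(z) = n$ and $r_0 = n - z r_1$. By Lemma~\ref{lem:2}, a left factor $t - h$ of $C$ with $(t-h)(t-\reverse h) = M$ corresponds to $h = z + v$ satisfying $C(h) = 0$ together with $\reverse v = -v$ and $v^2 = 0$ (the two conditions coming from $h + \reverse h = 2z$ and $h\reverse h = z^2$). By Lemma~\ref{lem:1}, since $M(h) = v^2 = 0$, the condition $C(h) = 0$ reduces to $R(h) = n + v r_1 = 0$, i.e.\ $v r_1 = -n$. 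The hypothesis $(t-z)^2 \mid C\reverse C$ combined with $C\reverse C = \reverse C C \in \R[t]$ forces, upon evaluating $C\reverse C$ and its derivative at $z$, the identities $n \reverse n = \reverse n n = 0$ and $r_1 \reverse n + n \reverse{r_1} = 0$, together with its reverse $\reverse{r_1} n + \reverse n\, r_1 = 0$.

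For the ``$\Leftarrow$'' direction, assume $\reverse{r_1} n = \reverse{C'(z)} C(z) \neq 0$ and, in the generic case, that $r_1$ is invertible in $\CGAp$. I would define $v := -n r_1^{-1}$, so that $v r_1 = -n$ is immediate. Multiplying the identity $\reverse{r_1} n = -\reverse n\, r_1$ by $\reverse{r_1}^{-1}$ on the left and by $r_1^{-1}$ on the right yields $n r_1^{-1} = -\reverse{r_1}^{-1} \reverse n$, so $v = \reverse{r_1}^{-1} \reverse n$ as well; taking the reverse of this second expression gives $\reverse v = n r_1^{-1} = -v$, verifying the grade-two condition. The nullity then follows from combining both expressions for $v$:
\begin{equation*}
  v^2 = \bigl(\reverse{r_1}^{-1} \reverse n\bigr)\bigl(-n r_1^{-1}\bigr) = -\reverse{r_1}^{-1}(\reverse n\, n)\, r_1^{-1} = 0,
\end{equation*}
using $\reverse n\, n = 0$. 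Thus $h := z + v$ is the desired left zero of $C$ with $(t-h)(t-\reverse h) = (t-z)^2$.

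For the ``$\Rightarrow$'' direction, suppose such a left factor exists, so that $C = (t - h) D$; a standard argument shows that $D$ is again a spinor polynomial, with norm polynomial $C\reverse C / M$. Setting $u := z - h$, we have $u^2 = 0$ and $\reverse u = -u$, and evaluating at $z$ gives $n = u D(z)$ and $r_1 = D(z) + u D'(z)$, whence
\begin{equation*}
  \reverse{C'(z)} C(z) = \bigl(\reverse{D(z)} - \reverse{D'(z)}\, u\bigr)\, u\, D(z) = \reverse{D(z)}\, u\, D(z),
\end{equation*}
since the $\reverse{D'(z)} u^2 D(z)$ term drops. In the generic case where $(t-z)^2$ is the \emph{exact} power of $(t-z)$ dividing $C\reverse C$, one has $D(z)\reverse{D(z)} = (D\reverse D)(z) \neq 0$, so $D(z)$ is invertible in $\CGAp$; cancellation in $\reverse{D(z)}\, u\, D(z) = 0$ would then force $u = 0$, contradicting $n = u D(z) \neq 0$. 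Hence $\reverse{C'(z)} C(z) \neq 0$.

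The main obstacle I foresee concerns the non-generic situations in which $r_1$ is a zero divisor (so the formula $v = -n r_1^{-1}$ is not directly available in the ``$\Leftarrow$'' direction) or $(t-z)^2$ is not the exact power of $(t-z)$ dividing $C\reverse C$ (so $D(z)$ may fail to be invertible in the ``$\Rightarrow$'' direction). Both cases will probably require a finer analysis exploiting the left annihilating point of $[n]$ furnished by Lemma~\ref{lem:3}, or a limiting/perturbation argument reducing to the generic case.
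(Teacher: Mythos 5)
Your approach is genuinely different from the paper's and more elementary where it applies: you work directly with the remainder $R = r_1t + r_0$, characterize the sought-after $h$ as $h = z+v$ with $\reverse v = -v$ and $v^2 = 0$, and construct $v = -nr_1^{-1}$ from the Study/null identities. The paper instead normalizes via a conformal transformation to reach $z=0$ and $e_\infty C(0)=0$, invokes the classification of elementary motions from Dorst to force $h = e_\infty b$ for a plane $b$, and then solves a linear system in the four-quaternion representation. Your computation in the ``$\Leftarrow$'' direction (verifying $\reverse v = -v$ and $v^2 = 0$ from $\reverse{r_1}n = -\reverse n\,r_1$ and $\reverse n\,n = 0$) is correct as far as it goes, and your ``$\Rightarrow$'' computation $\reverse{C'(z)}C(z) = \reverse{D(z)}\,u\,D(z)$ is also right.

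However, the gaps you flag at the end are not corner cases that can be waved away — they are precisely what the theorem has to handle, and your sketch does not indicate how to close them. In the ``$\Leftarrow$'' direction, the hypothesis $\reverse{C'(z)}C(z) \neq 0$ does \emph{not} imply that $r_1 = C'(z)$ is invertible: $r_1$ can perfectly well be a nonzero zero divisor with $\reverse{r_1}n \neq 0$, and then $v \coloneqq -nr_1^{-1}$ is simply undefined. The paper's proof is designed exactly so as not to need invertibility of $c_1 = C'(0)$; it extracts the relevant quaternion components $r_0$, $r_2$ of $c_1$ and treats separately the cases where both, exactly one, or neither is nonzero, showing that a solution exists precisely when not both vanish, and that both vanish exactly when $\reverse{c_1}c_0 = 0$. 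Similarly, in the ``$\Rightarrow$'' direction the cancellation $\reverse{D(z)}\,u\,D(z)=0 \Rightarrow u=0$ breaks down whenever $(t-z)^2$ divides $C\reverse C$ to higher order, so that $D(z)$ has vanishing norm, and your suggestion of a ``limiting/perturbation argument'' is not developed. So while your route is appealingly direct and illuminating in the generic case, it does not yet amount to a proof of the stated equivalence; the conformal normalization plus the forced form $h = e_\infty b$ is what lets the paper dispose of the non-invertible cases, and some substitute for that structure would be needed to make your argument complete.
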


\begin{remark}
  Note that Theorem~\ref{th:2} only talks about real zeros $z$ of
  $C\reverse{C}$. Including complex zeros would be possible but result in a left
  factor $t-h$ where $h$ has complex coefficients -- something we generally wish
  to avoid. Also note that a complex zero can always be paired with its complex
  conjugate to provide suitable input data for the factorization according to
  Theorem~\ref{th:1}.
\end{remark}

We will prove Theorem~\ref{th:2} by reducing the statement to the special case
where $e_\infty C(z)=0$. This can always be done via a conformal transformation:
From Lemma~\ref{lem:3} we know there exists a vector $a$, such that $aC(z)=0$.
If $a \neq e_\infty$, the vector $a+e_\infty$ does not square to zero and hence
represents an invertible transformation. We can thus study the polynomial
$(a+e_\infty)C(a+e_\infty)$ instead of $C$. Indeed, because of $aC(z) = 0$ we
have
\begin{equation*}
  e_\infty(a + e_\infty)C = e_\infty a C(z) = 0,
\end{equation*}
so that the polynomial $(a + e_\infty)C(z)(a+e_\infty)$ fulfills the desired
property. This allows us to make simplifying assumptions on $C(z)$ by the
following lemma.

\begin{lemma}
  \label{lem:4}
  Let $n \in \CGAp$ be such that $e_\infty n=0$. Then $n=\eps_1(h_1+\eps_2 h_2)$
  for two quaternions $h_1$, $h_2\in\H$.
\end{lemma}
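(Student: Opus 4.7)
My plan is to expand $n$ in the four-quaternion representation
$n = q_0 + \eps_1 q_1 + \eps_2 q_2 + \eps_3 q_3$ with $q_0,\dots,q_3 \in \H$
and translate the hypothesis $e_\infty n = 0$ into linear conditions on the
$q_i$ that can be repackaged as the stated form.

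The first step is to verify the following identities by elementary
computation in $\Cl{4,1}$, using $e_\infty = e_+ + e_-$,
$e_o = \tfrac{1}{2}(e_- - e_+)$, $\eps_3 = e_+ e_-$, $e_\infty^2 = 0$ and
anti-commutativity of distinct basis vectors:
\begin{equation*}
  e_\infty \eps_1 = 0, \qquad e_\infty \eps_2 = e_{123}(1 - \eps_3), \qquad e_\infty \eps_3 = e_\infty.
\end{equation*}
In addition, $e_\infty$ commutes with every $h \in \H$, since each quaternion
unit is a product of two elements of $\{e_1,e_2,e_3\}$, each of which
anti-commutes with $e_\infty$. Combining these facts yields
\begin{equation*}
  e_\infty n = e_\infty(q_0 + q_3) + e_{123}(1 - \eps_3)\,q_2.
\end{equation*}

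Next, I would argue that the two summands lie in complementary homogeneous
components with respect to the grading induced by the pair $\{e_+, e_-\}$:
the first summand has degree one in $\{e_+, e_-\}$ (a single factor
$e_\infty$), whereas the second has degrees zero and two (contributed by $1$
and by $\eps_3 = e_+ e_-$). Hence $e_\infty n = 0$ forces each summand to
vanish separately. Left multiplication by $e_\infty$ on $\H$ is injective,
because $e_\infty$, $\qi e_\infty$, $\qj e_\infty$, $\qk e_\infty$ are
linearly independent, so $q_0 + q_3 = 0$. Since $e_{123}$ is invertible
(as $e_{123}^2 = -1$) and the decomposition
$(1-\eps_3)q_2 = q_2 - q_2 \eps_3$ again splits into independent pieces
under the same $\{e_+, e_-\}$-grading, we also obtain $q_2 = 0$.

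Finally, substituting $q_0 = -q_3$ and $q_2 = 0$ and using
$\eps_1 \eps_2 = \eps_3 - 1$ from Table~\ref{tab:eps-multiplication1}, we find
\begin{equation*}
  n = \eps_1 q_1 + (\eps_3 - 1)\,q_3 = \eps_1 q_1 + \eps_1 \eps_2\,q_3 = \eps_1\bigl(q_1 + \eps_2 q_3\bigr),
\end{equation*}
so that setting $h_1 := q_1$ and $h_2 := q_3$ delivers the claimed form. The
only real obstacle is the Clifford bookkeeping in the three preliminary
identities above; once they are in hand, the rest is the grading argument and
an immediate rewrite via the multiplication table.
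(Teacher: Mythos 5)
Your proof is correct, and it follows the route the paper implicitly intends: expand $n$ in the four-quaternion representation, use the multiplication rules for $\eps_1,\eps_2,\eps_3$ (and the observation that $e_\infty$ kills $\eps_1$ and commutes with quaternions), decompose by the $\{e_+,e_-\}$-degree to conclude $q_0+q_3=0$ and $q_2=0$, and then repackage via $\eps_1\eps_2 = \eps_3-1$. The paper omits this argument as a ``straightforward computation,'' and yours is a clean and complete version of exactly that computation.
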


We omit the proof of Lemma~\ref{lem:4} as it consists of a straightforward
computation.

\begin{proof}[Proof of Theorem~\ref{th:2}]
  We assume that $z=0$ and $e_\infty C(0) = 0$. Neither of these assumptions is
  a loss of generality. The former can be achieved by a simple
  re-parametrization, the latter by the considerations preceding
  Lemma~\ref{lem:4}.

  Let us define $c_0 \coloneqq C(0)$ and $c_1 \coloneqq C'(0)$ so that the
  remainder polynomial when dividing $C$ by $t^2$ equals $R=c_1t+c_0$. In order
  to find a left factor $t-h$ of $C$, we need to find $h$ such that $R(h)=0$ and
  $(t-h)(\reverse{t-h})=t^2$. From \cite{dorst16}, we know that there are only
  two types of elementary motions which have a norm polynomial with a root of
  multiplicity two, translations and transversions. Both are given by $t-h$
  where $h= ab$ for a point $a$ and a \emph{plane $b$.} Assuming $t-h$ is a left
  factor of $C$ we further get $ac_0=0$. But by our initial assumption, we also
  have $e_\infty c_0=0$, which then implies $(ae_\infty+e_\infty a)c_0=0$ and
  therefore $a\cdot e_\infty =0$. But the only real point fulfilling this
  identity is $e_\infty$ itself and therefore we obtain $a=e_\infty$. Thus, we
  need to find $h=e_\infty b$ for a plane $b = b_1e_1+b_2e_2+b_3e_3+b_\infty
  e_\infty$ such that $R(h)=0$.

  By assumption we have $e_\infty C(0) = e_\infty c_0 = 0$. Lemma~\ref{lem:4}
  shows that there exist quaternions $q_1$, $q_2\in\H$ such that
  $c_0=\eps_1(q_1+\eps_2 q_2)$. Since $C$ is a spinor polynomial, $c_0= C(0)$
  fulfills the Study conditions, which, in this case, simplify to the single
  condition $\SC(q_1,q_2) = 0$. With this, the condition $R(h)=0$ becomes
  \begin{equation}
    \label{eq:5}
    \begin{aligned}
      0 &= e_\infty bc_1+\eps_1(q_1+\eps q_2)\\
        &= e_\infty(bc_1-e_{123}(q_1+\eps_2 q_2))\\
        &= e_\infty e_{123}(-e_{123}bc_1-(q_1+\eps_2 q_2))\\
        &=-\eps_1((b_1\qi+b_2\qj+b_3\qk)c_1-(q_1+\eps_2 q_2)).
    \end{aligned}
  \end{equation}
  Let us denote the vectorial quaternion $b_1\qi+b_2\qj+b_3\qk$ by $B$. By
  Lemma~\ref{lem:4}, Equation~\eqref{eq:5} is fulfilled, if and only if there
  exist quaternions $h_1$, $h_2\in\H$ such that
  \begin{equation*}
    Bc_1-(q_1+\eps_2 q_2)=\eps_1(h_1+\eps_2 h_2).
  \end{equation*}
  This is the case, if the sum of the coefficients of $1$ and $\eps_3$ as well
  as the coefficient of $\eps_2$ vanish in the four-quaternion representation of
  $Bc_1-(q_1+\eps_2 q_2)$. Using the four-quaternion representation
  $c_1=r_0+\eps_1 r_1 +\eps_2 r_2+ \eps_3 r_3$, this is equivalent to the two
  quaternionic equations
  \begin{equation}
    \label{eq:6}
    Br_0-q_1=0,\qquad Br_2-q_2=0
  \end{equation}
  for the unknown $B$. Either of these equations has a unique solution for $B$
  provided both $r_0$ and $r_2$ are different from $0$, i.e. $B_1=q_1r_0^{-1}$
  and $B_2=q_2r_2^{-1}$. We have to show that both solutions are non-trivial,
  coincide and are vectorial quaternions. To see this, we use the multiple root
  condition $(\reverse{C}C)^\prime(0)=\reverse{c_0}c_1+\reverse{c_1}c_0=0$.
  Again invoking the four-quaternion representations for $c_0$ and $c_1$,
  respectively, this yields four quaternionic conditions:
  \begin{align}
    \reverse{q_2}r_0+\reverse{r_2}q_1 &=0, \label{eq:7}\\
    \reverse{q_1}r_2+\reverse{r_0}q_2&=0, \label{eq:8}\\
    \SC(q_1,r_0)&=0, \label{eq:9}\\
    \SC(q_2,r_2)&=0. \label{eq:10}
  \end{align}
  Equations \eqref{eq:9} and \eqref{eq:10} ensure, that both $B_1$ and $B_2$ are
  vectorial quaternions. To see, that they are the same, let us take $B_1$ and
  plug it into the second equation of \eqref{eq:6}. Multiplying away the
  denominator $r_0\reverse{r}_0$ of $B_1$ and using Eq.~\eqref{eq:8} and
  Eq.~\eqref{eq:9}, this yields
  \begin{equation*}
    q_1\reverse{r_0}r_2-r_0\reverse{r_0}q_2 = -r_0\reverse{q_1}r_2 +r_0\reverse{q_1}r_2=0.
  \end{equation*}
  Thus, $B_1$ and $B_2$ are equal. Further, this solution is non-trivial, as we
  assumed $r_0$ and $r_2$ to be non-zero and further, $q_1$ and $q_2$ cannot
  vanish simultaneously as otherwise $c_0$ is zero and consequently, $C$ not
  reduced.

  Now let us consider the case, where either $r_0$ or $r_2$ is zero, but they do
  not vanish simultaneously. Then Eq.~\eqref{eq:7} ensures, that one of the
  equations in \eqref{eq:6} is already fulfilled and we can take the solution of
  the other one.

  Finally, from $r_0=r_2=0$ it follows $c_1=\eps_1(r_1+\eps_2 r_3)$ which is a
  contradiction to our assumption $\reverse{c_1}c_0\neq 0$.
\end{proof}

\begin{remark}
  For the case of \emph{split quaternions}, a sub-algebra of \CGAp, the article
  \cite{scharler21} gives a necessary and sufficient criterion for existence of
  a linear left factor that corresponds, via Theorem~\ref{th:1}, to two
  intersection points $[C(z_1)]$, $[C(z_2)]$ of $[C]$ and $\NQ$. It
  reads
  \begin{equation}
    \label{eq:11}
    \reverse{C(z_1)}C(z_2) \neq 0
  \end{equation}
  and we may view the condition of Theorem~\ref{th:2} as a limiting version of
  \eqref{eq:11}. This suggests that \eqref{eq:11} may be a sufficient and
  \emph{necessary} condition also in the case of spinor polynomials.
  Unfortunately, this is not true which can be seen in the following
  example.
\end{remark}

\begin{example}
  \label{ex:2}
  In Example~\ref{ex:1}, we considered the spinor polynomial $C=t^2+1+\eps_1(b t
  \qi + a \qj)$ which admit (infinitely many) factorizations if and only if
  $a=b$. The two intersection points of $[C]$ with $\NQ$ are $[n_1]$ and $[n_2]$
  where
  \begin{equation*}
    n_1=\eps_1(a\qj+bi\qi)\quad\text{and}\quad n_2=\eps_1(a\qj-bi\qi).
  \end{equation*}
  As $\eps_1^2=0$, it holds $\reverse{n_1}n_2=0$, regardless of the existence of
  a factorization.
\end{example}

\section{A Multiplication Technique for Factorizing Spinor Polynomials}
\label{sec:factorization-technique}

As remarked in Section~\ref{sec:geometric-factorization-algorithm}, not every
spinor polynomials $C$ admit a factorization. For the sub-algebras of dual and
split quaternions of $\CGAp$, there exist a multiplication techniques which
allows to obtain a factorization of $RC$ where $R \in \R[t]$ is a suitable real
polynomial. For applications in kinematics
\cites{gallet16,li18:_universality_theorem,lercher22:_multiplication_technqiue},
this is important as $C$ and $CR$ represent the same rational motion. While the
real co-factor $R$ in case of dual quaternions is quite tricky to compute
\cite{li19}, a generic real polynomial of suitable degree will do in case of
split quaternions \cite{scharler21}. The same is true for spinor polynomials and
a proof will be given in this section.

\begin{theorem}
  \label{th:3}
  Let $P$ be a spinor polynomial which does not admit a left or a right factor.
  Then there exists a spinor polynomial $H \coloneqq t-h$ such that $C\coloneqq
  PH$ admits both a left and a right factor.
 \end{theorem}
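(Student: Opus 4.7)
The strategy is to exploit the norm identity $C\reverse{C} = (H\reverse{H})(P\reverse{P})$, which exhibits $M \coloneqq H\reverse{H}$ as a monic quadratic real factor of $C\reverse{C}$. By construction, $H$ is a linear spinor right factor of $C = PH$, so the task reduces to producing a linear spinor left factor of $C$. I would obtain this by applying Theorem~\ref{th:1} with $M$ as the distinguished quadratic factor and with its two roots $w_1, w_2$ as the chosen parameter values.

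First, choose the spinor $h$ so that the roots $w_1, w_2$ of $H\reverse{H}$ are distinct and disjoint from the zero set of $P\reverse{P}$; both are Zariski-open conditions on $h$. Then $C(w_i) = P(w_i)(w_i - h)$. Since $w_i$ is a root of $H\reverse{H}$, we have $(w_i - h)\reverse{(w_i - h)} = 0$, so $[w_i - h]$ is a null displacement and Lemma~\ref{lem:3} supplies a grade one left annihilator $b_i$. The factor $P(w_i)$ is invertible with $\reverse{P(w_i)}P(w_i) = (P\reverse{P})(w_i) \in \C^\times$; it therefore lies in the complex Study variety, and its sandwich action is grade-preserving, so transporting $b_i$ through this sandwich yields $a_i \coloneqq P(w_i) b_i \reverse{P(w_i)}$, a grade one left annihilator of $C(w_i)$.

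Applying Theorem~\ref{th:1} additionally requires the non-orthogonality $a_1 \cdot a_2 \neq 0$, and I expect this to be the main technical obstacle. Since $a_1 \cdot a_2$ is an algebraic function of the spinor $h$, it suffices to exhibit a single explicit $h$ for which it is nonzero — for instance an $h$ of pure quaternion form — with the verification carried out by direct computation in the four quaternion representation. The non-factorizability of $P$ constrains the structure of the annihilators coming from the roots of $P\reverse{P}$, but does not force the new annihilators $a_1, a_2$ built from the fresh intersection points with $\NQ$ to be orthogonal, because the data $b_1, b_2$ vary freely with $h$ through the sandwich by $P(w_i)$. Zariski openness then yields a nonempty open set of admissible spinors, and for any such $h$ Theorem~\ref{th:1} produces a linear spinor polynomial left factor $t - h'$ of $C$ with $(t - h')(t - \reverse{h'}) = M$. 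Combined with the trivial right factor $H$, this gives the desired pair of linear spinor factors of $C$.
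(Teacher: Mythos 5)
Your left-factor construction is the same idea as the paper's: evaluate $C=PH$ at the two roots $w_1,w_2$ of $M\coloneqq H\reverse{H}$, note that $C(w_i)=P(w_i)H(w_i)$ with $[H(w_i)]$ a null displacement, transport a left annihilating point of $H(w_i)$ by the invertible spinor $P(w_i)$ to obtain a left annihilating point $a_i$ of $C(w_i)$, and feed $a_1,a_2$ into Theorem~\ref{th:1}. This matches the paper's $l_i = P(z_i)e\reverse{P(z_i)}$ exactly. Where you diverge is in two places, one a genuine shortcut and one a genuine gap.

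The shortcut: you dispatch the right factor by observing that $H$ itself is a right factor of $C=PH$. That is literally true and satisfies the statement as written. The paper instead constructs a \emph{second} right factor from the roots $t_1,t_2$ of $P\reverse{P}$, transporting right annihilators of $P(t_i)$ via $\reverse{H(t_i)}(\cdot)H(t_i)$, and separately treats the case where $P\reverse{P}$ has a single real root of multiplicity at least two (for which there are no distinct $t_1,t_2$ to use). This extra work is not needed for the literal statement of Theorem~\ref{th:3}, but it is what the subsequent Corollary's inductive argument leans on; your proof does not supply a right factor interacting with the quadratic factors of $P\reverse{P}$, so it establishes the theorem but not the structure the paper actually uses afterward.

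The gap: your genericity argument is too implicit. You assert that $a_1\cdot a_2$ is ``an algebraic function of the spinor $h$'' and that exhibiting one good $h$ settles it, but the left annihilator $b_i$ of the null displacement $w_i-h$ is not a single-valued algebraic function of $h$ (it is a choice of a nonzero kernel vector, possibly on a positive-dimensional solution space, and $w_i$ itself involves a branch of a square root of the coefficients of $h$). The paper sidesteps this entirely by choosing $H=t+e\wedge f$ for two non-orthogonal null vectors (points) $e,f$, so that $H(z_1)=ef$ and $H(z_2)=-fe$ have the explicit left annihilators $e$ and $f$. This gives a clean six-parameter rational family in which $l_1\cdot l_2$ is a polynomial in $(e,f)$, and the non-vanishing claim reduces transparently to avoiding finitely many proper subvarieties of a six-dimensional parameter space. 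Without such an explicit parametrization, your claim that ``the data $b_1,b_2$ vary freely with $h$'' is not substantiated; this is exactly the ``main technical obstacle'' you flag, and the paper's device for overcoming it is precisely what is missing from your proposal.
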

 \begin{proof}
   Denote the zeros of $H\reverse{H}$ by $z_1$ and $z_2$. Further, let us first
   assume that the norm polynomial $P\reverse{P}$ has two distinct roots $t_1$
   and $t_2$. Let $b_1$ and $b_2$ be right annihilators of $P(t_1)$ and
   $P(t_2)$, respectively. To prove, that the polynomial $C$ admits a left and a
   right factor, we need to show, that $C(t_1)$ and $C(t_2)$ have non-orthogonal
   right annihilators, and $C(z_1)$ and $C(z_2)$ have non-orthogonal left
   annihilators for an appropriate choice of~$H$. In addition, $H$ has to
   fulfill the Study conditions.

   We choose two non-orthogonal vectors $e$ and $f$ and define $H$ as the
   interpolation polynomial of $ef$ and $-fe$, i.e. $H=t+e\wedge f$. We have six
   essential degrees of freedom to choose $e$ and $f$ but need to avoid
   orthogonality, i.e. one quadratic condition. For further arguments, we
   additionally need to ensure, that the roots of $H\reverse{H}$ are different
   from any roots of $P\reverse{P}$, which gives a finite number of additional
   algebraic conditions to avoid.

   As $P(t_i)b_i=0$ and $H(t_i)\reverse{H(t_i)}\in\C\setminus\{0\}$, it holds
   \begin{equation*}
     C(t_i)\reverse{H(t_i)}b_iH(t_i)=P(t_i)H(t_i)\reverse{H(t_i)}b_iH(t_i)=0,
   \end{equation*}
   for $i \in \{1, 2\}$. Thus, we have found suitable right annihilators $r_1 =
   \reverse{H(t_1)}b_1H(t_1)$ and $r_2 = \reverse{H(t_2)}b_2H(t_2)$ of $C(t_1)$
   and $C(t_2)$ respectively. Similar arguments show, that $l_1\coloneqq
   P(z_1)e\reverse{P(z_1)}$ is a left annihilator of $C(z_1)$ and $l_2\coloneqq
   P(z_2)f\reverse{P(z_1)}$ is a left annihilator of $C(z_2)$.

   As we want to avoid orthogonality of these left and right annihilators, we
   need to fulfill one condition of degree four and one of degree two on the
   coefficients of $H$. In summary, we need to avoid a finite number of
   algebraic sets of dimension at most five which is certainly possible.

   We still need to consider the case, where $P\reverse{P}$ has only one root
   $z\in\R$ of multiplicity at least two. Let $b_1$ be a left annihilating point
   of $P(z)$ and $b_2$ a right annihilating point of $P(z)$. Further, let us
   define $a_1=P(z_1)e\reverse{P(z_1)}$ for some vector $e$ and
   $a_2=\reverse{H(z)}b_2H(z)$. It is straightforward to see, that $a_1$ is a
   left annihilator of $C(z_1)$ and $b_1$ a left annihilator of $C(z)$. Under
   the condition that they are not orthogonal, we can use Theorem~\ref{th:1} to
   construct a left factor of $C$. Similarly, $a_2$ is a right annihilator of
   $C(z)$, and $e$ is a right annihilator of $C(z_2)$. Thus we can use
   Theorem~\ref{th:1} to find a right factor of $C$. Again, this is possible if
   we avoid a finite number of low dimensional algebraic sets.
 \end{proof}

\begin{corollary}
  For any spinor polynomial $P \in \CGAp[t]$ there exists a spinor polynomial $H
  \in \CGAp[t]$ such that $PH$ admits a factorization with linear factors.
  Moreover, there exists a real polynomial $R \in \R[t]$ such that $PR$ admits a
  factorization with linear factors.
\end{corollary}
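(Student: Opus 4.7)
My plan is to prove both parts by first establishing the existence of a spinor polynomial $H$ (Part~1) and then deriving the real co-factor $R$ as a suitable combination (Part~2).

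For Part~1, I will proceed by induction on $\deg P$, using Theorem~\ref{th:3} to handle the non-factoring case. The base case $\deg P \le 1$ is trivial. For the inductive step with $\deg P = n \ge 2$: if $P$ admits a left linear factor $L$, I write $P = LP'$, apply the inductive hypothesis to $P'$ (of degree $n-1$) to obtain $H'$ with $P'H' = L_2 \cdots L_k$, and conclude that $PH' = L L_2\cdots L_k$. If $P$ admits no left linear factor, I invoke Theorem~\ref{th:3}. I observe that its proof only uses the existence of annihilators guaranteed by Lemma~\ref{lem:3} and therefore applies whether or not $P$ happens to admit a right factor; this gives a linear $H_0 = t - h_0$ such that $PH_0$ admits a left linear factor $L_1$, yielding $PH_0 = L_1 C_1$ with $\deg C_1 = n$.

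The hard part will be that $\deg C_1 = \deg P$, so a naive induction on the degree of the factored polynomial does not close. I plan to handle this by iterating the construction: producing a sequence $C_1, C_2, \ldots$ of spinor polynomials of degree $n$ together with accumulating linear left factors $L_1 L_2 \cdots$ and linear co-factors $H_0 H_1 \cdots$ until some $C_k$ admits a natural left linear factor, so that the outer degree can drop and the induction close. The main obstacle is bounding the number of necessary iterations; I expect this to follow from the fact that the proof of Theorem~\ref{th:3} shows the obstructions to factorization sit inside a finite union of algebraic subsets of dimension at most five inside a six-dimensional parameter space for each $H_i$, so that generic choices at each step avoid them and eventually exhaust the possible orthogonality patterns of annihilators that prevent direct factorization.

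For Part~2, I will leverage that, by construction, the resulting $H$ is a product of linear spinor polynomials $H = H_0 H_1 \cdots H_{m-1}$. Setting $R \coloneqq H\reverse{H}$, I observe that each $H_i\reverse{H_i}$ is a real scalar polynomial and hence central in $\CGAp[t]$, so successive inward cancellations give
\begin{equation*}
  R = H_0 H_1 \cdots H_{m-1}\reverse{H_{m-1}}\cdots\reverse{H_0} = \prod_{i=0}^{m-1} H_i\reverse{H_i} \in \R[t].
\end{equation*}
Using the factorization $PH = L_1 \cdots L_k$ from Part~1 then yields
\begin{equation*}
  PR = (PH)\reverse{H} = L_1 \cdots L_k \reverse{H_{m-1}}\cdots\reverse{H_0},
\end{equation*}
a factorization of $PR$ into linear spinor factors, since each $\reverse{H_i}$ is linear.
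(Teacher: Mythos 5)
Your Part~2 is correct and matches the paper's argument: $R = H\reverse{H}$ is real because $H$ is a product of linear spinor factors $H_i$ whose norms $H_i\reverse{H_i}$ are real and central, and $PR = (PH)\reverse{H}$ then inherits a full linear factorization. The gap is in Part~1. You rightly observe that a single application of Theorem~\ref{th:3} leaves a quotient $C_1$ of the same degree as $P$, but the iteration you propose has no termination argument. In fact, because the left factor $L_1$ in the proof of Theorem~\ref{th:3} is constructed from the roots of $H_0\reverse{H_0}$, one has $L_1\reverse{L_1} = H_0\reverse{H_0}$ and therefore $C_1\reverse{C_1} = P\reverse{P}$: the quotient is in no measurable sense closer to admitting a factorization than $P$ was, and the dimension count in the proof of Theorem~\ref{th:3} only controls existence of factors for each new product $C_iH_i$, not the eventual behaviour of the quotients $C_i$. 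Genericity of the $H_i$ does not ``exhaust the orthogonality patterns''.

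The missing idea is the \emph{right} factor that Theorem~\ref{th:3} also delivers and that you discard. Writing $PH_0 = L_1C_1 = C_2L_2$, the quadratic $L_2\reverse{L_2}$ divides $P\reverse{P}$ and is hence coprime to $L_1\reverse{L_1} = H_0\reverse{H_0}$ for generic $H_0$ (a condition already imposed in the proof of Theorem~\ref{th:3}). A short division-with-remainder argument then forces $L_2$ to be a right factor of $C_1$, so $PH_0 = L_1C_1'L_2$ with $\deg C_1' = n-1$. Taking $H\coloneqq H_0\reverse{L_2}H'$ with $H'$ from the inductive hypothesis applied to $C_1'$, and using that $L_2\reverse{L_2}\in\R[t]$ is central and itself a product of two linear spinor factors, one obtains $PH = L_1(L_2\reverse{L_2})(C_1'H')$ as a product of linear spinor factors, with $H$ again a spinor polynomial since products of spinor polynomials are spinor polynomials. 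This degree reduction, together with the easy cases where $P$ already admits a left or a right factor, is what the paper's one-line ``induction on the degree of $P$'' is compressing; your iteration scheme as written would not reach it.
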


\begin{proof}
  The first statement follows by induction on the degree of $P$ from
  Theorem~\ref{th:3}. By Theorem~\ref{th:3}, the co-factor $H$ admits a
  factorization with linear factors. Thus, the second statement follows from the
  first with $R = H\reverse{H}$.
\end{proof}

\begin{example}
  \label{ex:3}
  The polynomial $t^2+\eps_3$ has the norm polynomial $(t^2+1)(t^2-1)$. After
  division by either of its two real quadratic factors, the respective remainder
  polynomials, $\eps_3+1$ and $\eps_3-1$, are constant and have no zero. Thus,
  $t^2+\eps_3$ does not admit a factorization into two linear factors by
  Lemma~\ref{lem:2}.

  However, with vectors $e = e_1 + e_o$ and $f = e_2 + e_\infty$, we have $e
  \cdot f = -1 \neq 0$, $H = t - e \wedge f = t + \qk - \qi\eps_1 + \qj\eps_2 +
  \qk\eps_3$ and $(t^2 + \eps_3)H = (t-h_1)(t-h_2)(t-h_3)$ where
  \begin{equation*}
    \begin{aligned}
      h_1 &= -\qk - \qi\eps_1 + \qj\eps_2 - \eps_3, \\
      h_2 &= \phantom{-}\qk + (\qi + \tfrac{1}{2}\qj)\eps_1 - \qj\eps_2 + \eps_3, \\
      h_3 &= -\qk + (\qi - \tfrac{1}{2}\qj)\eps_1 - \qj\eps_2 - \eps_3.
    \end{aligned}
  \end{equation*}
  There is nothing particular about the vectors $e$ and $f$. Any generic choice
  will do.
\end{example}

\section{Factorization of an Algebraic Four-Bar Motion}
\label{sec:four-bar}

This section demonstrates, at hand of a single example, that ideas of our
geometric factorization algorithm generalize to algebraic motions. Its purpose
is twofold: It serves as a motivation for introducing an alternative
factorization algorithm for spinor polynomials and it provides an outlook to
future research. The reader is kindly asked to view the contents of this section
as a proof of concept and accept that some steps in the computation only come
with a rather vague justification. The actual factorization theory of algebraic
motions is yet to be worked out.

Let us consider the algebraic curve $C$ in the projective space $\P(\H) =
\P^3(\R)$ over the quaternions $x = x_0 + x_1\qi + x_2\qj + x_3\qk$ that is
given by the ideal
\begin{multline}
  \label{eq:12}
  \langle
  x_0^2+x_1^2+x_2^2+x_3^2-4(x_0x_2+x_1x_3),\\
  13x_0^2-3x_1^2+13x_2^2-3x_3^2+16(x_0x_1+x_2x_3)-12(x_0+x_1)(x_2-x_3)
  \rangle.
\end{multline}
It is of genus one. The two generating polynomials were obtained as ``circle
constraint equations'' in the sense of \cite{brunnthaler06}{Section~3.1]. They
encode the condition that two unit vectors $m_1$, $m_2$ in the moving coordinate
frame are mapped, via rotations described by \eqref{eq:12}, to respective
circles with normalized unit axes $f_1$, $f_2$ in the fixed system. The motion
given by $C$ can be mechanically realized as coupler motion of a spherical
four-bar linkage. It is our aim, to compute the fixed axes $f_1$, $f_2$ and the
moving axes $f_1$, $f_2$ using our geometric factorization algorithm.

By abuse of notation, we denote by $\NQ$ the intersection of the null quadric
with the projective space $\P(\H) = \P^3(\R)$ of quaternions and we refer to
this intersection by ``null quadric'' as well. It is a regular and doubly ruled
quadric. In a first step, we intersect the curve $C$ and the null quadric $\NQ$.
Since the curve is of degree four, we expect eight intersection points. Indeed,
we find
\begin{equation}
  \label{eq:13}
  \begin{aligned}
    [n_1] &= [2 - 2\ci\qi + \sqrt{2}((1+\ci)\qj - (1-\ci)\qk)],\\
    [n_2] &= [2 - 2\ci\qi - \sqrt{2}((1+\ci)\qj - (1-\ci)\qk)], \\
    [n_3] &= [5 - (4 + 3\ci)\qi + (3 - 4\ci)\qj + 5\ci\qk], \\
    [n_4] &= [5 + (4 + 3\ci)\qi - (3 - 4\ci)\qj + 5\ci\qk],
  \end{aligned}
\end{equation}
and the respective complex conjugates $[\overline{n}_1]$, $[\overline{n}_2]$,
$[\overline{n}_3]$, $[\overline{n}_4]$. It will later be important that the
point pairs
\begin{equation*}
  (n_1, \overline{n}_2),\quad
  (n_2, \overline{n}_1),\quad
  (n_3, \overline{n}_4),\quad
  (n_4, \overline{n}_3)
\end{equation*}
lie on rulings of the first kind on $\NQ$, while the point pairs
\begin{equation}
  \label{eq:14}
  (n_1, n_2),\quad
  (\overline{n}_1, \overline{n}_2),\quad
  (n_3, n_4),\quad
  (\overline{n}_3, \overline{n}_4)
\end{equation}
lie on rulings of the second kind.

Left annihilating points to $n_1$, $n_2$, $n_3$ and $n_4$ are
\begin{equation*}
  a_1= 2e_1 + \sqrt{2}\ci(e_2 - e_3),\quad
  a_2= 2e_1 - \sqrt{2}\ci(e_2 - e_3),\quad
  a_3= e_1 - \ci e_2,\quad
  a_4= e_1 + \ci e_2,
\end{equation*}
respectively. They come in conjugate complex pairs, even if they did not arise
from conjugate complex null displacements. This is no coincidence as we only
expect two real left factors. In fact, points on the same ruling of the second
kind yield the same left annihilators. The two revolute axes in the fixed frame
are given by the wedge products
\begin{equation*}
  a_1 \wedge \overline{a}_1 = -a_2 \wedge \overline{a}_2 = 4\sqrt{2}\ci(\qj+\qk),
  \quad
  a_3 \wedge \overline{a}_3 = -a_4 \wedge \overline{a}_4 = -2\ci\qk.
\end{equation*}
After normalization, we find the axis directions $f_1 =
\frac{1}{\sqrt{2}}(\qj+\qk)$ and $f_2 = \qk$, respectively.

The right factors will give rise to the moving axes. Their computation is based
on the null points \eqref{eq:13} as well but, in view of \eqref{eq:14}, we
compute right annihilating points of $n_1$, $\overline{n}_2$, $n_3$, and
$\overline{n}_4$:
\begin{equation*}
    b_1 = e_2 + \ci e_3,\quad
    \overline{b}_2 = e_2 - \ci e_3,\quad
    b_3 = 4e_1 - 3e_2 + 5\ci e_3,\quad
    \overline{b}_4 = 4e_1 - 3e_2 - 5\ci e_3.
\end{equation*}
Here, points on the same rulings of first kind give identical annihilators. The
moving axes directions are found as
\begin{equation*}
  b_1 \wedge \overline{b}_1 = b_2 \wedge \overline{b}_2 = 2\ci\qi,
  \quad
  b_3 \wedge \overline{b}_3 = b_4 \wedge \overline{b}_4 = -10\ci(3\qi + 4\qj).
\end{equation*}
Unit direction vectors of the two moving axes in the moving frame are $m_1 =
-\frac{1}{5}(3\qi + 4\qj)$ and $m_2 = -\qi$, respectively (the choice of sign is
irrelevant but helps to make the visualization of the mechanism more compact).
Mapping these two points via transformations in $C$ produces the spherical
four-bar motion. The underlying mechanical structure is illustrated in
Figure~\ref{fig:1}.

\begin{figure}
  \centering
\includegraphics{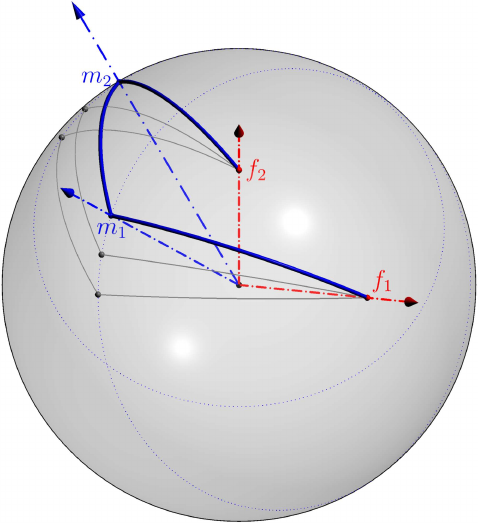}
  \caption{A spherical four-bar linkage.}
  \label{fig:1}
\end{figure}

The seaming ease of these computations is deceiving. In case of four-bar
linkages with axes only in the fixed and in the moving frame, everything is
straightforward indeed. If further moving axes are involved, additional
considerations are required:
\begin{itemize}
\item A proper mathematical definition of left and right factors is needed.
\item It seems already clear that, in contrast to rational motion, left and
  right factors of algebraic motions generically don't exist. Formulation of
  necessary and maybe even sufficient criteria for their existence would be
  useful.
\item For algebraic motions with three or more factors, it will be necessary to
  ``split off'' a left/or right factor by a non-obvious algebraic version of
  polynomial division.
\item One needs to ensure that all axes are computed with respect to a
  particular configuration of the linkage and also with respect to the same
  coordinate frame.
\end{itemize}
The latter point is exemplified by above example: The axes $f_1$ and $f_2$ are
fixed in the fixed frame, the axes $m_1$, $m_2$ are fixed in the moving frame.
In spite of the labeling, Figure~\ref{fig:1} does not display $m_1$ and $m_2$
directly but their images in the fixed frame for particular configurations of
the linkage.

Working out a general factorization theory for algebraic motions will be left to
future publications.

A further line of future research concerns generalizations of our results to
conformal algebras of dimension larger than three or even to more general
Clifford algebras. In fact, with exception of Lemma~\ref{lem:3}, none of the
arguments depends crucially on the dimension of the underlying conformal space.
The formulation of Lemma~\ref{lem:3} makes sense in other Clifford algebras as
well but its rather technical proof in the appendix relies on arguments that
only hold for conformal geometric algebra in dimension three.

\section*{Acknowledgment}

Johannes Siegele was supported by Austrian Science Fund (FWF) P~33397-N (Rotor
Polynomials: Algebra and Geometry of Conformal Motions).

\section{Appendix}

\begin{table}
  \centering
  \caption{Multiplication table for $e_0$, $e_{123}$ and $e_\infty$ with
    $\eps_1$, $\eps_2$ and $\eps_3$.}
  \label{tab:eps-multiplication2}
  \begin{tabular}{c|ccc}
    & $\eps_1$      & $\eps_2$     & $\eps_3$  \\
    \hline
    $e_0$ & $e_{0123\infty}$  & $0$ & $-e_0$  \\
    $e_{123}$ & $-e_\infty$ & $-e_0$          & $e_{0123\infty}-e_{123}$ \\
    $e_\infty$ & $0$     & $2e_{123}-e_{0123\infty} $     & $e_\infty$
  \end{tabular}
\end{table}

\begin{proof}[Proof of Lemma~\ref{lem:3}]
  Let us write $n$ in the four quaternion representation, i.e. $n =
  q_0+q_1\eps_1+q_2\eps_2+q_3\eps_3$, and $x = x_0e_0+Xe_{123}+x_\infty
  e_\infty$, where $X=x_1\qi+x_2\qj+x_3\qk\in\H$ is a vectorial quaternion.
  Using the multiplication Table~\ref{tab:eps-multiplication2} we get
  \begin{equation}
    \begin{aligned}
      \label{mult:1}
      xn &= (x_0e_0+Xe_{123}+x_\infty e_\infty)(q_0+q_1\eps_1+q_2\eps_2+q_3\eps_3)\\
         &= e_0 (x_0(q_0-q_3)-Xq_2)\\
         &\phantom{=}+e_\infty(x_\infty(q_0+q_3)-Xq_1)\\
         &\phantom{=}+e_{123}(X(q_0-q_3)+2x_\infty q_2)\\
         &\phantom{=}+e_{0123\infty}(x_0q_1-x_\infty q_2+Xq_3).
    \end{aligned}
  \end{equation}
  For this product to equal zero, we need that the coefficients of $e_0$,
  $e_\infty$, $e_{123}$ and $e_{0123\infty}$ in Equation~\eqref{mult:1} vanish.
  This gives rise to four equations over quaternions:
  \begin{align}
    x_0(q_0-q_3)-Xq_2=0\label{eq:sys1}\tag{\scshape{i}}\\
    x_\infty(q_0+q_3)-Xq_1=0\label{eq:sys2}\tag{\scshape{ii}}\\
    X(q_0-q_3)+2x_\infty q_2=0\label{eq:sys3}\tag{\scshape{iii}}\\
    x_0q_1-x_\infty q_2+Xq_3=0\label{eq:sys4p}\tag{\scshape{iv}$\prime$}
  \end{align}
  For convenience, let us replace \eqref{eq:sys4p} by
  2\eqref{eq:sys4p}+\eqref{eq:sys3} which results in the new fourth equation
  \begin{equation}
    \label{eq:sys4}
    \tag{\scshape{iv}}
    X(q_0+q_3)+2x_0q_1=0.
  \end{equation}
  The aim is to find a non-zero solution for this system of quaternion
  equations. We will do so successively by explicitly giving a solution and
  subsequently discussing the cases where this solution is zero.

  \paragraph{Case 1:}
  We give two explicit solutions. They span a subspace of the vector space of
  all solutions. For a slightly more compact notation, let us define $\norm{x}
  \coloneqq x\reverse{x}$.

  \paragraph{Solution 1.1:} The first solution is given by
  $X=q_2\reverse{(q_0-q_3)}=-(q_0-q_3)\reverse{q_2}$. Plugging this into
  Equations~\eqref{eq:sys1}--\eqref{eq:sys4} we obtain
  \begin{align*}
    (x_0+\norm{q_2})(q_0-q_3)                   & =0 \\
    x_\infty(q_0+q_3)+(q_0-q_3)\reverse{q_2}q_1 & =0 \\
    (\norm{q_0-q_3}+2x_\infty)q_2               & =0 \\
    q_2\reverse{(q_0-q_3)}(q_0+q_3)+2x_0q_1     & =0.
  \end{align*}
  The choice $x_0=-\norm{q_2}$ solves the first equation and $x_\infty =
  -\norm{q_0-q_3}/2$ solves the third equation. To see that this is a
  solution for the whole system, let us at first recall the following: The point
  $[n]$ lies on the null quadric whence $\norm{q_0}-\norm{q_3}-\SC(q_1,q_2)=0$.
  Using this and the Study condition
  $\Vect(q_0\reverse{q_3})=\Vect(q_1\reverse{q_2})$ we obtain
  \begin{equation}
    \label{eq:15}
    \begin{aligned}
      (q_0+q_3)\reverse{(q_0-q_3)} &= \norm{q_0}-\norm{q_3} -2\Vect(q_0\reverse{q_3})\\
                                   &= \SC(q_1,q_2)-2\Vect(q_1\reverse{q_2})\\
                                   &= 2q_2\reverse{q_1}.
    \end{aligned}
  \end{equation}
  Similarly, we will get
  \begin{equation}\label{eq:16}
    \reverse{(q_0+q_3)}(q_0-q_3)=2\reverse{q_1}q_2.
  \end{equation}
  Using Equations~\eqref{eq:15} and \eqref{eq:16} it is easy to see, that we
  indeed found a solution for our system.

  \paragraph{Solution 1.2:}
  Let us set $X=q_1\reverse{(q_0+q_3)}=-(q_0+q_3)\reverse{q_1}$. Again using
  Equations~\eqref{eq:15} and \eqref{eq:16}, the system of equations reads
  \begin{align*}
    (x_0+\tfrac{1}{2}\norm{q_0+q_3})(q_0-q_3)&=0\\
    (2\norm{q_1}+2x_\infty)q_2&=0\\
    (\norm{q_0+q_3}+2x_0)q_1&=0
  \end{align*}
  which obviously admits the solution $x_0=-\norm{q_0+q_3}/2$,
  $x_\infty=-\norm{q_1}$.\footnote{It can be shown that Solutions~1.1 and 1.2
    are linearly dependent. The purpose of stating them separately is that in
    our further discussion we can assume that both of them vanish which gives us
    more conditions to work with.}

  \paragraph{Case 2:}
  It might still be the case, that any linear combination of the solutions given
  above is zero, i.e. $(q_0-q_3)\reverse{q_2}=(q_0+q_3)\reverse{q_1}=0$ and
  $\norm{q_0+q_3}=\norm{q_0-q_3}=\norm{q_1}=\norm{q_2}=0$. From these
  assumptions, we immediately get $\SC(q_0,q_3)=0$ and $\norm{q_0}=-\norm{q_3}$
  which simplifies the null quadric condition to $2\norm{q_0}=\SC(q_1,q_2)$.
  Again we will give two solutions for this case. For this, we need the
  following property for zero divisors in the algebra of quaternions: For two
  quaternions $x$, $y\in\H$ with $\norm{x}=0$ it holds
  \begin{equation}
    \label{eq:17}
    x\reverse{y}x=(x\reverse{y}+y\reverse{x})x=\SC(x,y)x.
  \end{equation}

  \paragraph{Solution 2.1:}
  Let us choose $X=q_2\reverse{(q_0+q_3)}=-(q_0+q_3)\reverse{q_2}$.
  Equations~\eqref{eq:sys1} and \eqref{eq:sys4} suggests $x_0=0$ as a solution.
  Putting $x_\infty = -2 \norm{q_0}$ and using Equations~\eqref{eq:15} and
  \eqref{eq:16}, Equation~\eqref{eq:sys2} reads
  \begin{align*}
    x_\infty(q_0+q_3)+(q_0+q_3)\reverse{q_2}q_1&= \bigl(x_\infty+\tfrac{1}{2}\SC(q_0+q_3,q_0-q_3)\bigr)(q_0+q_3) \\
                                               &= \bigl(x_\infty+\SC(q_1,q_2)\bigr)(q_0+q_3)\\
                                               &=(x_\infty+2\norm{q_0})(q_0+q_3)\\
                                               &=0,
  \end{align*}
  and Equation\eqref{eq:sys3} reads
  \begin{align*}
    q_2\reverse{(q_0+q_3)}(q_0-q_3)+2x_\infty q_2 &= 2q_2\reverse{q_1}q_2+2x_\infty q_2\\
                                                  &= 2(\SC(q_1,q_2)+x_\infty)q_2\\
                                                  &= 2(2\norm{q_0}+x_\infty)q_2\\
                                                  &=0.
  \end{align*}
  Thus, we have really found a solution for this case.

  \paragraph{Solution 2.2:}
  Let us choose $X=q_1\reverse{(q_0-q_3)}=-(q_0-q_3)\reverse{q_1}$. From
  Eqs.~\eqref{eq:sys2} and \eqref{eq:sys3} we obtain $x_\infty =0$. Similar to
  the solution above, we can show that $x_0=-2\norm{q_0}$ gives a solution for
  our system of equations.

  \paragraph{Case 3:}
  In this case, the construction of the solution is a bit more intricate. For
  now, we will assume, that $q_1\neq 0$, $q_2\neq 0$, $q_0+q_3\neq 0$ and
  $q_0-q_3\neq 0$. Again we need to consider the case, where every linear
  combination of Solution 2.1 and Solution 2.2 is zero. This condition yields
  $\norm{q_0}=\norm{q_1}=\norm{q_2}=\norm{q_3}=0$ and further
  \begin{equation*}
    (q_0-q_3)\reverse{q_2}=(q_0+q_3)\reverse{q_2}=(q_0+q_3)\reverse{q_1}=(q_0-q_3)\reverse{q_1}=0.
  \end{equation*}
  Denoting, once more, the intersection of null quadric and $\P(\H) = \P^3(\R)$
  by $\NQ$, above equations show that $[q_0+q_3]$ and $[q_0-q_3]$ lie on a
  ruling of $\NQ$ through $[q_1]$ and on a ruling of $\NQ$ through $[q_2]$.
  Thus, these rulings must coincide and therefore all four projective points in
  $\P(\H)$ lie on a common ruling. Through each of these points passes a ruling
  of the other (second) family of rulings. They intersect the plane of vectorial
  quaternions. They can be obtained as $[U_1]$,
  $[U_2]$, $[U_3]$, and
  $[U_4]$ where
  \begin{equation*}
    U_1 \coloneqq q_1r\reverse{q_1},\quad U_2\coloneqq q_2r\reverse{q_2},\quad U_3\coloneqq (q_0+q_3)r\reverse{(q_0+q_3)},\quad U_4\coloneqq (q_0-q_3)r\reverse{(q_0-q_3)},
  \end{equation*}
  and $r$ is an arbitrary quaternion such that $U_1$, $U_2$, $U_3$, and
  $U_4$ are all different from zero.

  \paragraph{Case 3.1:}
  Let us assume, that the $U_i$, $i=1,\ldots,4$ are pairwise linearly
  independent. Then there exist coefficients $\lambda_1$, $\lambda_2$,
  $\lambda_3$, $\lambda_4\in\C$ such that
  \begin{equation}
    \label{eq:18}
    \lambda_1 U_1 + \lambda_3 U_3 =
    \lambda_2 U_2 + \lambda_4 U_4.
  \end{equation}

  To solve our system of equations, we will choose $X=\lambda_1 U_1 + \lambda_3
  U_3 = \lambda_2 U_2 + \lambda_4 U_4$. Plugging this into
  Equation~\eqref{eq:sys1} and using a Study condition as well as \eqref{eq:17}, we obtain
  \begin{align*}
    x_0(q_0-q_3)-\lambda_4U_4q_2&=x_0(q_0-q_3)-\lambda_4(q_0-q_3)r\reverse{(q_0-q_3)}q_2\\
                                &=x_0(q_0-q_3)+\lambda_4(q_0-q_3)r\reverse{q_2} (q_0-q_3)\\
                                &=(q_0-q_3)(x_0+\lambda_4 \SC(q_0-q_3,q_2 \reverse{r})).
  \end{align*}
  This suggests the solution $x_0=-\lambda_4\SC(q_0-q_3,q_2\reverse{r})$. In the
  same way, Equation~\eqref{eq:sys4} suggests the solution
  $x_0=\lambda_1\SC(q_1,(q_0+q_3)\reverse{r})/2$. Equations~\eqref{eq:sys2} and
  \eqref{eq:sys3} lead us to $x_\infty =-\lambda_3\SC(q_0+q_3,q_1\reverse{r})$
  and $x_\infty = \lambda_2\SC(q_2,(q_0-q_3)\reverse{r})/2$, respectively.

  To see, that these two solutions for $x_0$ coincide, let us multiply
  Equation~\eqref{eq:18} with $(q_0+q_3)$ from the right hand side and with
  $\reverse{q_2}$ from the left hand side and use Equation~\eqref{eq:16} to obtain
  \begin{align*}
    \lambda_1 \reverse{q_2}U_1(q_0+q_3)&= \lambda_4 \reverse{q_2}U_4(q_0+q_3),\\
    -\lambda_1\reverse{q_2}q_1r\reverse{(q_0+q_3)}q_1 &= 2\lambda_4 \reverse{q_2}(q_0-q_3)r\reverse{q_2}q_1,\\
    \reverse{q_2}q_1(-\lambda_1\SC(q_1,(q_0+q_3)\reverse{r})) &= \reverse{q_2}q_1(2\lambda_4\SC(\reverse{q_2},\reverse{r}\reverse{(q_0-q_3)})).
  \end{align*}
  Further, it holds
  $\SC(\reverse{q_2},\reverse{r}\reverse{(q_0-q_3)}))=\SC(q_2,(q_0-q_3)r)=\SC(q_2\reverse{r},q_0-q_3)$.
  This shows, that both solutions for $x_0$ coincide, provided
  $\reverse{q_2}q_1\neq 0$. If this expression would be zero, we would have
  $q_1\reverse{q_2}=\reverse{q_1}q_2=0$, which implies that $q_1$ and $q_2$ and
  in turn $U_1$ and $U_2$ are linearly dependent. This is a contradiction to our
  assumptions for Case~3.1.

  To finish the discussion of Case~3.1 we have to show that the two solutions
  for $x_\infty$ are the same. Multiplying Equation~\eqref{eq:18} with
  $\reverse{q_1}$ from the left, and with $(q_0-q_3)$ from the right yields
  \begin{equation*}
    \reverse{q_1}q_2(2\lambda_3\SC(q_0+q_3,q_1\reverse{r}))=\reverse{q_1}q_2(-\lambda_2\SC(q_2,(q_0-q_3)\reverse{r})),
  \end{equation*}
  which shows, that the solutions for $x_\infty$ coincide, provided
  $\reverse{q_1}q_2\neq 0$.

  \paragraph{Case 3.2}
  If, however, $\reverse{q_2}q_1=0$, it follows from Equation~\eqref{eq:16} that
  also $\reverse{(q_0+q_3)}(q_0-q_3)=0$ which shows that $[q_1]=[q_2]$ and
  $[q_0+q_3]=[q_0-q_3]$. But this in turn implies, that $U_1$ and $U_2$ as well
  as $U_3$ and $U_4$ are linearly dependent. Thus we have two degrees of freedom
  for choosing $\lambda_1$, $\lambda_2$, $\lambda_3$, and $\lambda_4$ such that
  \eqref{eq:18} holds, and two linear conditions, that need to be fulfilled,
  i.e. the solutions for $x_0$ and $x_\infty$ should coincide. Therefore, it is
  possible, to choose $\lambda_1$, $\lambda_2$, $\lambda_3$, and $\lambda_4$,
  such that our system of equations is fulfilled.

  \paragraph{Case 3.3}
  In Case~3.2 we have actually treated the situation where $U_1$ and $U_2$ are
  linearly dependent. There, we also showed that this is the case if and only if
  $U_3$ and $U_4$ are linearly dependent. Hence, let us assume here that $U_1$
  and $U_3$ are linearly dependent. We choose $X=\lambda_2 U_2 + \lambda_4 U_4$
  for some yet to be determined $\lambda_2$ and $\lambda_4$. This means, by the
  arguments used in Case~3.1, that Equations~\eqref{eq:sys1} and \eqref{eq:sys3}
  are fulfilled with $x_0 = -\lambda_4 S(q_0-q_3, q_2\reverse{r})$ and $x_\infty
  = \lambda_2 S(q_2, (q_0-q_3)\reverse{r})/2$. The map $[\lambda_2,\lambda_4]
  \mapsto Xq_1$ is a projective map between two projective lines. Hence we can
  select $\lambda_2$ and $\lambda_4$ such that $Xq_1$ is a scalar multiple of
  $q_1$ and $X(q_0+q_3)$ is a scalar multiple of $q_0+q_3$ (and hence also of
  $q_1$).

  Now, by left-multiplying Equation~\eqref{eq:sys1} with $X$, we obtain
  \begin{equation}
    \label{eq:19}
    x_0X(q_0-q_3) - X^2q_2 = 0.
  \end{equation}
  Note that $X^2$ is a scalar. We plug \eqref{eq:19} into
  Equation~\eqref{eq:sys3} to get $X^2 = -2x_0x_\infty$. Left-multiplying
  Equation~\eqref{eq:sys2} with $X$ we can transform it into
  Equation~\eqref{eq:sys4}. Thus, Equations~\eqref{eq:sys2} and \eqref{eq:sys4}
  actually just give \emph{one further} linear condition on $\lambda_2$ and
  $\lambda_4$ so that a solution does exist.

  \paragraph{Case 4:} Here, we discuss the cases where it least one $q_1$,
  $q_2$, $q_0 + q_3$ or $q_0 - q_3$ is zero.

  \paragraph{Case 4.1:}
  Let us assume $q_1=0$. Then \eqref{eq:sys2} gives $x_\infty =0$. Using this,
  we see from \eqref{eq:sys3}, that we should choose $X=\lambda_4 U_4$. From
  this, we get $x_0=-\lambda_4\SC(q_0-q_3,q_2\reverse{r})$ by
  Equation~\eqref{eq:sys1}. Equation~\eqref{eq:16} implies, that \eqref{eq:sys4}
  is fulfilled, hence we found a solution. The other cases, where only one of
  the quaternions $q_2$, $q_0 + q_3$, $q_0 - q_3$ is zero, are similar.

  \paragraph{Case 4.2:}
  Let us assume $q_1 = q_0 + q_3 = 0$, i.e., $U_1 = U_3 = 0$. In this case,
  Eqs.~\eqref{eq:sys2} and \eqref{eq:sys4} are already fulfilled. Thus, after
  choosing $X\coloneqq \lambda_2U_2+\lambda_4U_4$, we are left with one equation
  to determine $x_0=-\lambda_4\SC(q_0-q_3,q_2\reverse{r})$ and one to determine
  $x_\infty=\lambda_2\SC(q_2,(q_0-q_3)\reverse{r})/2$.

  \paragraph{Case 4.3:}
  The case $q_2 = q_0 - q_3 = 0$, i.e., $U_2 = U_4 = 0$ is similar to Case~3.2.

  \paragraph{Case 4.4:}
  Let us assume $q_1 = q_0 - q_3 = 0$, i.e., $U_1=U_4=0$. In this case,
  Eqs.~\eqref{eq:sys2} and \eqref{eq:sys3} immediately yield $x_\infty=0$. We
  are only left with two equations $Xq_2=0$ and $X(q_0+q_3)=0$ which in general
  will only be fulfilled for $X=0$. However, $x_0$ can be chosen arbitrarily and
  we have found a non-zero solution.

  \paragraph{Case 4.5:}
  Let us assume $q_2=q_0+q_3=0$. In this case, we have $x_0=0$, $X=0$ and
  $x_\infty$ can be chosen arbitrarily.
\end{proof}

\begin{bibdiv}
\begin{biblist}

\bib{baeth2015factorization}{article}{
      author={Baeth, N.R.},
      author={Smertnig, D.},
       title={Factorization theory: from commutative to noncommutative
  settings},
        date={2015},
     journal={Journal of Algebra},
      volume={441},
       pages={475\ndash 551},
}

\bib{bayro-corrochano19}{book}{
      author={Bayro-Corrochano, E.},
       title={Geometric algebra applications. {Computer} vision, graphics and
  neurocomputing},
   publisher={Springer},
        date={2019},
      volume={1},
}

\bib{brunnthaler06}{incollection}{
      author={Brunnthaler, K.},
      author={Schröcker, H.-P.},
      author={Husty, M.},
       title={Synthesis of spherical four-bar mechanisms using spherical
  kinematic mapping},
        date={2006},
   booktitle={Advances in robot kinematics},
      editor={Lenarčič, J.},
      editor={Roth, B.},
   publisher={Springer Dordrecht},
       pages={377\ndash 384},
}

\bib{clifford1871}{article}{
      author={Clifford, W.K.},
       title={Preliminary {Sketch} of {Biquaternions}},
        date={1871},
     journal={Proc. London Math. Soc.},
       pages={381\ndash 395},
         url={https://api.semanticscholar.org/CorpusID:122312324},
}

\bib{dorst16}{article}{
      author={Dorst, L.},
       title={The construction of {3D} conformal motions},
        date={2016},
     journal={Math. Comput. Sci.},
      volume={10},
       pages={97\ndash 113},
}

\bib{gallet16}{article}{
      author={Gallet, M.},
      author={Koutschan, Ch.},
      author={Li, Z.},
      author={Regensburger, G.},
      author={Schicho, J.},
      author={Villamizar, N.},
       title={Planar linkages following a prescribed motion},
        date={2017},
     journal={Math. Comp.},
      volume={87},
       pages={473\ndash 506},
}

\bib{gentili2022regular}{book}{
      author={Gentili, G.},
      author={Stoppato, C.},
      author={Struppa, D.C.},
       title={Regular functions of a quaternionic variable},
   publisher={Springer Nature},
        date={2022},
}

\bib{gentili2021zeros}{article}{
      author={Gentili, G.},
      author={Stoppato, C.},
      author={Trinci, T.},
       title={Zeros of slice functions and polynomials over dual quaternions},
        date={2021},
     journal={Transactions of the American Mathematical Society},
      volume={374},
      number={08},
       pages={5509\ndash 5544},
}

\bib{gentili2007new}{article}{
      author={Gentili, G.},
      author={Struppa, C.},
       title={A new theory of regular functions of a quaternionic variable},
        date={2007},
     journal={Advances in Mathematics},
      volume={216},
      number={1},
       pages={279\ndash 301},
}

\bib{geroldinger2006non}{book}{
      author={Geroldinger, A.},
      author={Halter-Koch, F.},
       title={Non-unique factorizations: Algebraic, combinatorial and analytic
  theory},
   publisher={CRC Press},
        date={2006},
}

\bib{gordon65}{article}{
      author={Gordon, B.},
      author={Motzkin, Th.~S.},
       title={On the zeros of polynomials over division rings},
        date={1965},
     journal={Trans. Amer. Math. Soc.},
      volume={116},
       pages={218\ndash 226},
}

\bib{hegedus13:_factorization2}{article}{
      author={Hegedüs, G.},
      author={Schicho, J.},
      author={Schröcker, H.-P.},
       title={Factorization of rational curves in the {Study} quadric and
  revolute linkages},
        date={2013},
     journal={Mech. Mach. Theory},
      volume={69},
      number={1},
       pages={142\ndash 152},
}

\bib{hegedus15:_four_pose_synthesis}{article}{
      author={Hegedüs, G.},
      author={Schicho, J.},
      author={Schröcker, H.-P.},
       title={Four-pose synthesis of angle-symmetric {6R} linkages},
        date={2015},
     journal={ASME J. Mechanisms Robotics},
      volume={7},
      number={4},
}

\bib{Hestenes2001}{inproceedings}{
      author={Hestenes, D.},
      author={Li, H.},
      author={Rockwood, A.},
       title={New algebraic tools for classical geometry},
        date={2001},
   booktitle={Geometric computing with clifford algebras: Theoretical
  foundations and applications in computer vision and robotics},
      editor={Sommer, Gerald},
   publisher={Springer Berlin Heidelberg},
     address={Berlin, Heidelberg},
       pages={3\ndash 26},
}

\bib{kalkan22}{article}{
      author={Kalkan, B.},
      author={Li, Z.},
      author={Schröcker, H.-P.},
      author={Siegele, J.},
       title={The {Study} variety of conformal kinematics},
        date={2022},
     journal={Adv. Appl. Clifford Algebras},
      volume={32},
}

\bib{lercher22:_multiplication_technqiue}{article}{
      author={Lercher, J.},
      author={Schröcker, H.-P.},
       title={A multiplication technique for the factorization of bivariate
  quaternionic polynomials},
        date={2022},
     journal={Adv. Appl. Clifford Algebras},
      volume={32},
}

\bib{Li2001a}{inproceedings}{
      author={Li, H.},
      author={Hestenes, D.},
      author={Rockwood, A.},
       title={Generalized homogeneous coordinates for computational geometry},
        date={2001},
   booktitle={Geometric computing with clifford algebras: Theoretical
  foundations and applications in computer vision and robotics},
      editor={Sommer, Gerald},
   publisher={Springer Berlin Heidelberg},
     address={Berlin, Heidelberg},
       pages={27\ndash 59},
}

\bib{Li2001c}{inproceedings}{
      author={Li, H.},
      author={Hestenes, D.},
      author={Rockwood, A.},
       title={Spherical conformal geometry with geometric algebra},
        date={2001},
   booktitle={Geometric computing with clifford algebras: Theoretical
  foundations and applications in computer vision and robotics},
      editor={Sommer, Gerald},
   publisher={Springer Berlin Heidelberg},
     address={Berlin, Heidelberg},
       pages={61\ndash 75},
}

\bib{Li2001b}{incollection}{
      author={Li, H.},
      author={Hestenes, D.},
      author={Rockwood, A.},
       title={A universal model for conformal geometries of euclidean,
  spherical and double-hyperbolic spaces},
        date={2001},
   booktitle={Geometric computing with clifford algebras: Theoretical
  foundations and applications in computer vision and robotics},
      editor={Sommer, G.},
   publisher={Springer Berlin Heidelberg},
     address={Berlin, Heidelberg},
       pages={77\ndash 104},
}

\bib{li15:_survey}{inproceedings}{
      author={Li, Z.},
      author={Rad, T.-D.},
      author={Schicho, J.},
      author={Schröcker, H.-P.},
       title={Factorization of rational motions: A survey with examples and
  applications},
        date={2015},
   booktitle={Proceedings of the 14th {IFToMM} world congress},
      editor={Chang, Shuo-Hung},
}

\bib{li19b}{article}{
      author={Li, Z.},
      author={Scharler, D.~F.},
      author={Schröcker, H.-P.},
       title={Factorization results for left polynomials in some associative
  real algebras: {State} of the art, applications, and open questions},
        date={2019},
     journal={J. Comput. Appl. Math.},
      volume={349},
       pages={508\ndash 522},
}

\bib{li18:_universality_theorem}{article}{
      author={Li, Z.},
      author={Schicho, J.},
      author={Schröcker, H.-P.},
       title={Kempe's universality theorem for rational space curves},
        date={2018},
     journal={Found. Comput. Math.},
      volume={18},
      number={2},
       pages={509\ndash 536},
}

\bib{li19}{article}{
      author={Li, Z.},
      author={Schicho, J.},
      author={Schröcker, H.-P.},
       title={Factorization of motion polynomials},
        date={2019},
     journal={J. Symbolic Comput.},
      volume={92},
       pages={190\ndash 202},
}

\bib{li22:_characterization}{article}{
      author={Li, Z.},
      author={Schr{\"o}cker, H.-P.},
      author={Scharler, D.F.},
       title={A complete characterization of bounded motion polynomials
  admitting a factorization with linear factors},
        date={2022},
     journal={arXiv preprint arXiv:2209.02306},
}

\bib{liu23}{article}{
      author={Liu, K.},
      author={Kong, X.},
      author={Yu, J.},
       title={Algebraic synthesis and input-output analysis of 1-dof multi-loop
  linkages with a constant transmission ratio between two adjacent parallel,
  intersecting or skew axes},
        date={2023},
     journal={Mech. Mach. Theory},
      volume={190},
       pages={105467},
}

\bib{liu21}{article}{
      author={Liu, K.},
      author={Yu, J.},
      author={Kong, X.},
       title={Structure synthesis and reconfiguration analysis of
  variable-degree-of-freedom single-loop mechanisms with prismatic joints using
  dual quaternions},
        date={2021},
        ISSN={1942--4310},
     journal={J. Mech. Robot},
      volume={14},
      number={2},
}

\bib{lounesto2001}{book}{
      author={Lounesto, P.},
       title={Clifford algebras and spinors},
     edition={2},
      series={London Mathematical Society Lecture Note Series},
   publisher={Cambridge University Press},
        date={2001},
}

\bib{niven41}{article}{
      author={Niven, I.},
       title={Equations in quaternions},
        date={1941},
     journal={Amer. Math. Monthly},
      volume={48},
      number={10},
       pages={654\ndash 661},
}

\bib{scharler21}{article}{
      author={Scharler, D.~F.},
      author={Schröcker, H.-P.},
       title={An algorithm for the factorization of split quaternion
  polynomials},
        date={2021},
     journal={Adv. Appl. Clifford Algebras},
      volume={31},
      number={3},
}

\bib{selig05}{book}{
      author={Selig, J.~M.},
       title={Geometric fundamentals of robotics},
     edition={2},
      series={Monographs in Computer Science},
   publisher={Springer},
        date={2005},
}

\bib{smertnig2016factorizations}{incollection}{
      author={Smertnig, D.},
       title={Factorizations of elements in noncommutative rings: a survey},
organization={Springer},
        date={2016},
   booktitle={{Multiplicative Ideal Theory and Factorization Theory:
  Commutative and Non-commutative Perspectives}},
       pages={353\ndash 402},
}

\bib{study}{article}{
      author={Study, E.},
       title={{Von den Bewegungen und Umlegungen}},
        date={1891},
     journal={Math. Ann.},
      volume={441},
       pages={475\ndash 551},
}

\end{biblist}
\end{bibdiv}
 
\end{document}